\numberwithin{figure}{section}
\def\EquationsBySection{\def\theequation
{\thesection.\arabic{equation}}%
\@addtoreset{equation}{section}}
\newtheorem{theorem}{\bf Theorem}[section]
\newtheorem{proposition}[theorem]{\bf Proposition}
\newtheorem{lemma}[theorem]{\bf Lemma}
\newenvironment{proof}{Proof:}{\quad \hfill $\Box$ \vspace{2ex}}
\newtheorem{example}[theorem]{\bf Example}
\begin{document}

\renewcommand\thepage{{\arabic{page}}}

{\renewcommand\thesection
    {{\thesection}}}

\changetext{}{}{-1.3cm}{-1.3cm}{}

\newcommand{\rme}{\mathrm{e}}
\newcommand{\rmi}{\mathrm{i}}
\newcommand{\rmd}{\mathrm{d}}
\newcommand{\norm}[1]{\left\Vert#1\right\Vert}
\newcommand{\abs}[1]{\left\vert#1\right\vert}
\newcommand{\set}[1]{\left\{#1\right\}}
\newcommand{\inner}[1]{\left\langle#1\right\rangle}
\newcommand{\Real}{\mathbb R}
\newcommand{\eps}{\varepsilon}
\newcommand{\To}{\rightarrow}
\newcommand{\BX}{\mathbf{B}(X)}
\newcommand{\A}{\mathcal{A}}
\renewcommand{\vec}[1]{\boldsymbol{#1}}
%


\title{Computing Integrals Involved the Gaussian Function with a Small Standard Deviation
\thanks{This research was supported in part by the Ministry of Science and Technology of China under grant 2016YFB0200602, by the Natural Science Foundation of China under grants 11471013 and 11771464, and by the US National Science Foundation under grant DMS-1522332.
}}
\author{
Yunyun Ma\thanks{School of Computer Science and Network Security,
Dongguan University of Technology, Dongguan 523808, P. R. China.
{\it mayy007@foxmail.com}. }
\ and
Yuesheng Xu\thanks{Department of Mathematics and Statistics, Old Dominion University, Norfolk, VA 23529, USA.
School of Data and Computer Science, and Guangdong
Province Key Lab of Computational Science,  Sun
Yat-sen University, Guangzhou 510275, P. R. China.
{\it y1xu@odu.edu}. All correspondence should be sent to this author.}
}
\maketitle

\begin{abstract}
We develop efficient numerical integration methods for computing an integral whose integrand is a product of a smooth function and the Gaussian function with a small standard deviation. Traditional numerical integration methods applied to the integral normally lead to poor accuracy due to the rapid change in high order derivatives of its integrand when the standard deviation is small. The proposed quadrature schemes are based on graded meshes designed according to the standard deviation so that the quadrature errors on the resulting subintervals are approximately equal. The integral in each subinterval is then computed by considering the Gaussian function as a weight function and interpolating the smooth factor of the integrand at the Chebyshev points of the first kind. For a finite order differentiable factor, we design a quadrature scheme having accuracy of a polynomial order and for an infinitely differentiable factor of the integrand, we design a quadrature scheme having accuracy of an exponential order.
Numerical results are presented to confirm the accuracy of these proposed quadrature schemes.
\end{abstract}

\medskip

\noindent
{\em 2010 Mathematics Subject Classification:} 65D30

\smallskip

\noindent
{\em Key Words:}   integral involved the Gaussian function, small standard deviation,
graded meshes

{

\section{Introduction}

We consider in this paper developing efficient quadrature methods for an integral whose integrand is a product of a smooth function and the Gaussian function with a small standard deviation. Gaussian functions \cite{Arfken, John, Mandl, Temme,RKM} arise in many areas of mathematics, physics and engineering, especially in the fields of statistics, probability theory \cite{Feller1, Feller2}, image and signal processing \cite{Arfken, Gonzalez, Ito, Lu, Walter}. Design of numerical quadrature formulas for the integral is important for numerical analysis for the aforementioned applications.
When the standard deviation is small, high order derivatives of the Gaussian function
oscillate dramatically near the position of the peak of the curve of the Gaussian function,
and decay rapidly away from that position. Standard numerical quadratures of such an integral normally lead to poor accuracy due to the rapid change in the derivatives of its integrand when the standard deviation is small.
Computing the integral requires special effort.

Quadrature schemes that we design for computing the integral is based on graded meshes constructed according to the standard deviation so that the resulting quadrature errors on the subintervals are approximately equal. The integral in each of the subintervals is then computed by considering the Gaussian function as a weight function and interpolating the smooth factor of the integrand at the Chebyshev points of the first kind. We establish accuracy of exponential and polynomial orders for the proposed quadrature schemes according to the regularity of the smooth factor of the integrand. We present numerical results to confirm the accuracy estimates.

In the literature, quadrature formulas for the integrals involved the Gaussian function were designed according to properties of the Gaussian function.  The well-known Gauss-Hermite quadrature \cite{Abramowitz, Davis} in  numerical analysis is designed for the integrals with the Gaussian weight in an unbounded integration interval. The weights of the quadrature formulas can be computed exactly, which are integrals with the integrand defined by a product of a polynomial and the Gaussian function. We call them the moments. The computation of these moments in a bounded integration interval can be found in \cite{Steen}. The numerical quadrature for higher dimensional integration with the Gaussian weight was developed in \cite{Lu0}. However, quadrature formulas for integrals involved the Gaussian function are inadequate in the literature for the case when the standard deviation of the Gaussian function is small. Such integrals require careful numerical treatment, since higher order derivatives of the Gaussian function have oscillation with large magnitude.

The purpose of this paper is to design efficient quadrature formulas for the integrals involved the Gaussian function for the case when the standard deviation is small. Motivated by the subdivision in nonuniform fashion for the singular integrals \cite{Xu1, Xu2, Schwab} and oscillatory integrals \cite{Xu3, Xu4}, we shall divide the integration interval into subintervals according to the standard deviation of the Gaussian function, and approximate the integrals in each of the resulting subintervals by replacing the smooth factor of integrand by an interpolating polynomial. The interpolation nodes are chosen as the Chebyshev points of the first  kind \cite{Cheney,Davis,Shen,Kuan}. The weights of the quadrature formulas are computed by utilizing the error function \cite{Abramowitz, Gradshteyn, Lebedev, Temme}. The accuracy of these  quadrature formulas increases as the standard deviation of the Gaussian function tends to zero and the computational complexity is independent of  the  standard deviation of the Gaussian function.

This paper is organized in six sections. We study in Section 2 useful properties of the Gaussian function to motivate the numerical integration for the integrals involved the Gaussian function with a small standard deviation.
In Section 3, we establish a basic quadrature formula for computing the integrals involved the Gaussian function in a reference integration interval. We then propose in Section 4 two composite quadrature formulas based on dividing the integration interval into subintervals according to the standard deviation of the Gaussian function, and using the formula developed in the previous section to calculate the integrals in the subintervals.  A computation method of the weights for the quadrature formulas is provided in Section 5. We show in Section 6 numerical results to verify the numerical accuracy of the proposed quadrature formulas.

\section{Integrals Involved the Gaussian Function}

In this section, we first discuss intrinsic difficulties in numerical integration by using traditional quadrature schemes for an integral whose integrand is a product of a smooth function and the Gaussian function with a small standard deviation. This motivates
us to design graded meshes of the integration interval according to the standard deviation of the Gaussian function.

We begin with recalling the definition of the Gaussian function.
For three fixed real numbers $c_0$, $c_1$ and $c_2$, a Gaussian function is of the form
\begin{equation*}
  G(x;c_0,c_1,c_2):=c_1\exp\set{-\dfrac{(x-c_2)^2}{2c_0^2}},~\text{for}~x\in\mathbb{R},
\end{equation*}
where $c_0>0$ is  called the  standard deviation, $c_1>0$ is the maximum of $G$, and
$c_2$ is the position of the maximum of $G$ occurring.
The graph of $G$ is a symmetric bell shaped curve. The parameter $c_0$ controls the width of the bell, and $c_1$ and $c_2$ are respectively the height and the position of the peak of the bell.
The Gaussian function with $c_1:=1/(c_0\sqrt{2\pi})$  is the probability density function of a normally distributed random variable with expected value $c_2$ and variance $c_0^2$.
Without loss of generality, we assume that $c_1=1$.
The behaviours of $G$ and its derivatives are influenced by the standard deviation and the center of the peak in the graph of $G$.
We simplify the parameters of $G$ by
\begin{equation}\label{Sec2:Gauss}
 G_{\alpha,\beta}(x):={\rm e}^{-\alpha^2(x-\beta)^2}, ~\text{for}~x\in\mathbb{R},
\end{equation}
where $\alpha>0$ and $\beta$ is a real number. We let $G_\alpha:=G_{\alpha,0}$.
Clearly, $\alpha:=1/(\sqrt{2}c_0)$ is the $1/\sqrt{2}$-multiple of the reciprocal of the standard deviation of the Gaussian function.
A small standard deviation corresponds to a large parameter $\alpha$. Therefore, in the remaining part of this paper,
we shall consider the integral involved $G_{\alpha,\beta}$ for a large $\alpha$.
Specifically, we shall study numerical integration for the integral having the form
\begin{equation}\label{Sec2:Int}
  \mathcal{I}_{\alpha}[f]:=\int_If(x)G_\alpha(x){\rm d}x,
\end{equation}
where $I:=[0,1]$, $\alpha\gg1$ and $f\in C(I)$ is independent of $\alpha$.
The integrals involved $G_{\alpha,\beta}$ with $\beta\neq 0$
may be treated by first splitting the integral interval into two subintervals, on each of which $\beta$ is an end point,
and then changing the integrals defined on the subintervals into the integrals defined on $I$ having the form \eqref{Sec2:Int}. The quadrature methods to be proposed for \eqref{Sec2:Int} can extend straightforward to the general case.

\vspace{-2em}
\begin{center}
\makeatletter
\def\@captype{figure}
\makeatother
\includegraphics[width=7.0cm, height=7.8cm]{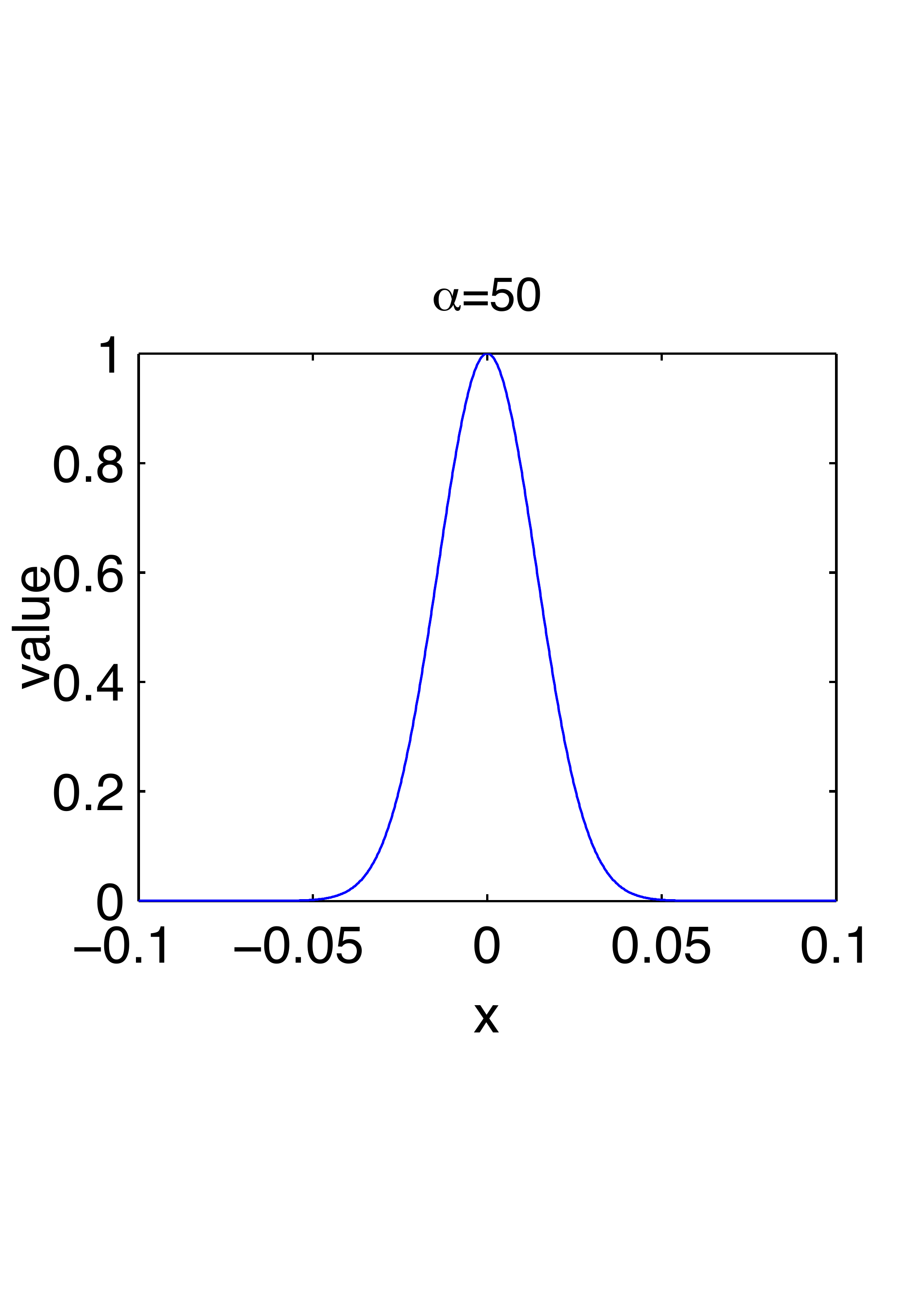}\ \hspace{2cm} \
\includegraphics[width=7.0cm, height=7.8cm]{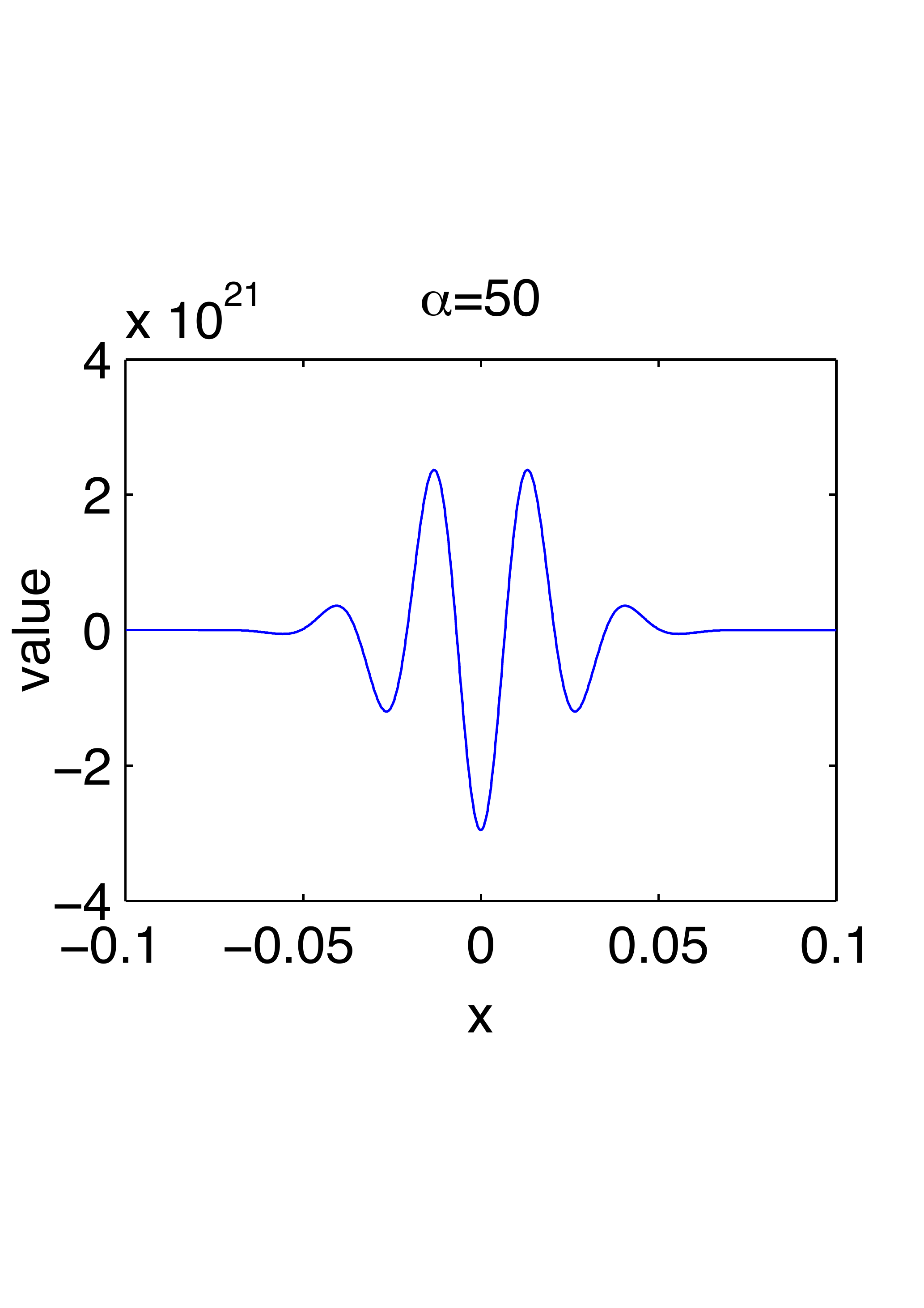}\\
\vspace{-4.5em}
\centerline{\small A: ~$m=0$ ~\hspace{7.3cm} B: ~$m=10$ }
\vspace{-1em}
{\small\caption{The derivatives of $G_\alpha$}
\label{Sec2:Fig1}}
\end{center}

\vspace{-2em}
\begin{center}
\makeatletter
\def\@captype{figure}
\makeatother
\includegraphics[width=7.0cm, height=7.8cm]{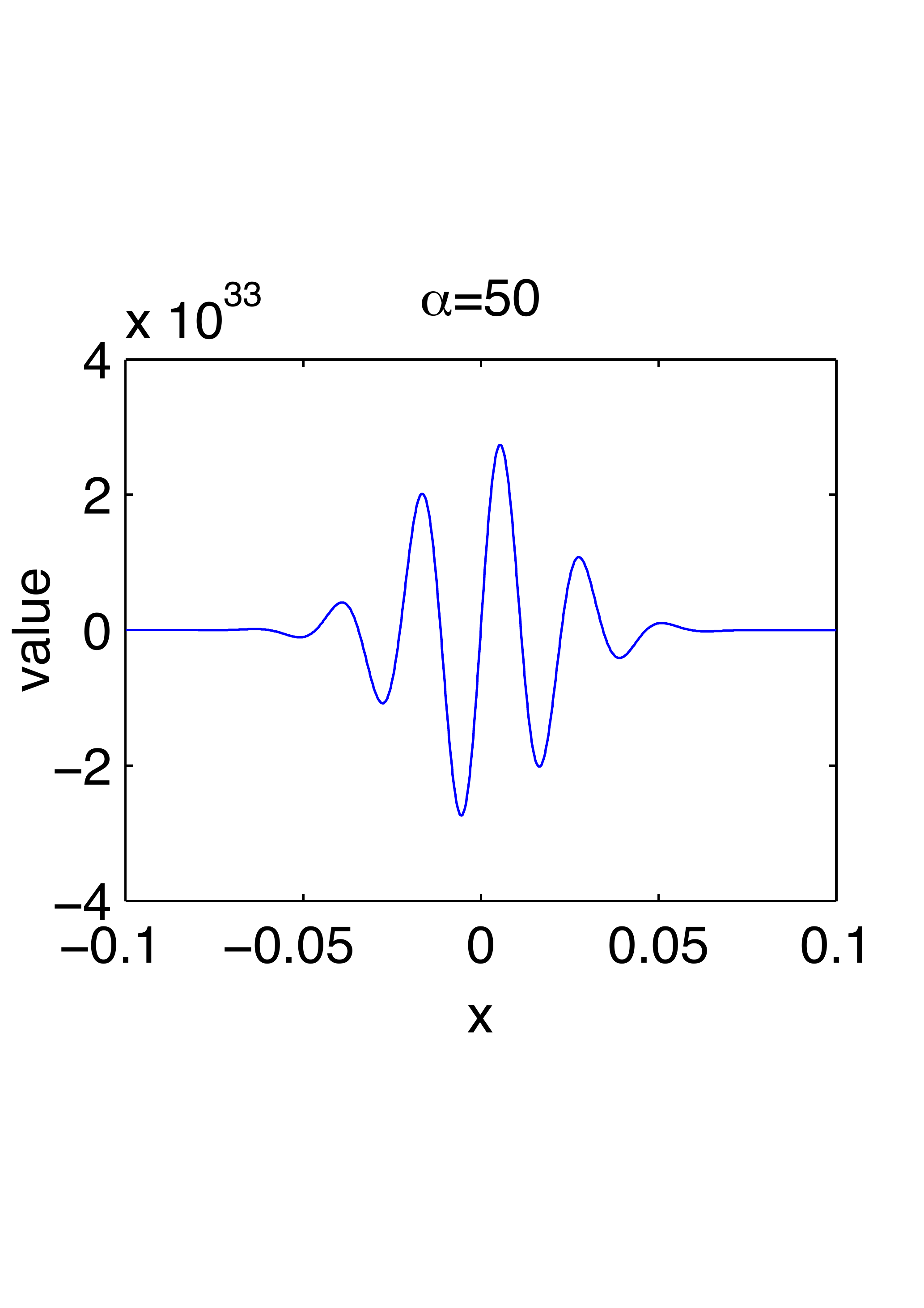}\ \hspace{2cm} \
\includegraphics[width=7.0cm, height=7.8cm]{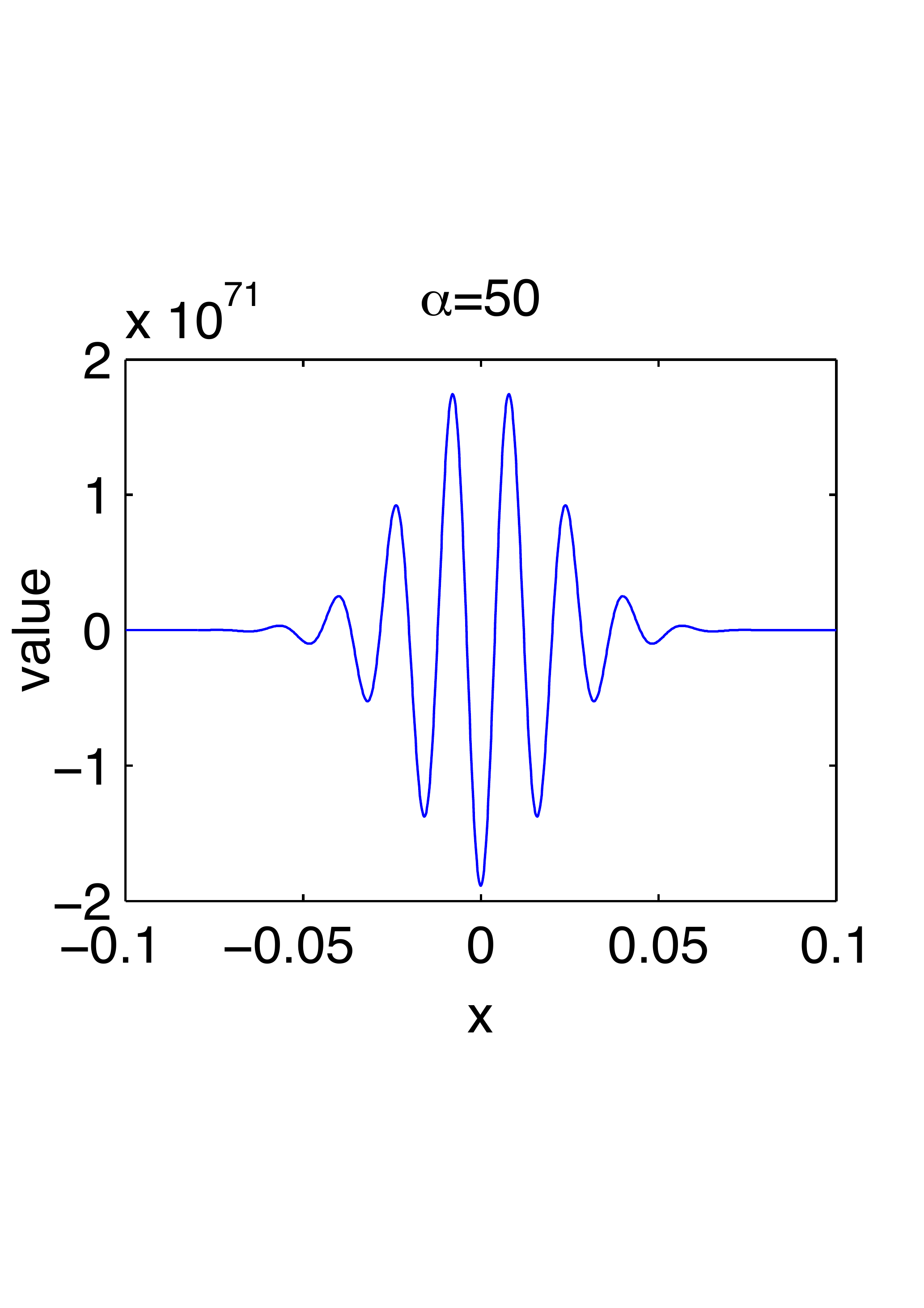}\\
\vspace{-4.5em}
\centerline{\small A: ~$m=15$ ~\hspace{7.3cm} B: ~$m=30$ }
\vspace{-1em}
{\small\caption{The derivatives of $G_\alpha$}
\label{Sec2:Fig2}}
\end{center}

We first review properties of  $G_\alpha$ and its derivatives for $\alpha>1$. Let $\mathbb{N}:=\set{1, 2,\ldots}$ and $\mathbb{N}_0:= \{0\}\cup\mathbb{N}$.
For $n\in\mathbb{N}_0$, we denote by $H_n$ the Hermite polynomials \cite{Abramowitz}.
For $m\in\mathbb{N}_0$, the $m$-th derivative of $G_\alpha$ is given by
\begin{equation}\label{Sec2:DerG}
  G^{(m)}_\alpha(x)=(-1)^m\alpha^mH_m(\alpha x)G_\alpha(x), ~\text{for}~x\in\mathbb{R}.
\end{equation}
We plot in Figures \ref{Sec2:Fig1}-\ref{Sec2:Fig3}
the graphs of $G^{(m)}_\alpha$ with $\alpha=50$ for different $m$ in the interval  $[-0.1,0.1]$, with  $m=0$, $10$, $15$, $30$, $35$ and $50$. From the graph of $G_{\alpha}$ shown in Figure \ref{Sec2:Fig1} (A), we see that it looks like a bell, with the center of the bell being zero  and the height of  the bell being one.
However,  in Figures \ref{Sec2:Fig1} (B), \ref{Sec2:Fig2}-\ref{Sec2:Fig3}, the graph of
the derivatives of $G_{\alpha}$ oscillates rapidly near zero when $m$ becomes large and the maximum of the derivatives increases fast as $m$ gets large. For the special case that $m\in\mathbb{N}_0$  is even,
by introducing the Hermite number  \cite{Kim} given by
\begin{equation*}
H_m(0)= (-1)^{m/2}2^{m/2}(m-1)!!,
\end{equation*}
we observe that the maximum of the derivatives of $G_\alpha$  grows in  an exponential order of $\alpha$, since $G^{(m)}_\alpha(0)=\alpha^mH_m(0)$. Meanwhile, the tail of the graph of the derivatives of $G_{\alpha}$ falls off quickly on the both sides and approaches the $x$-axis. The high order derivatives of the Gaussian function  behave like oscillatory functions near the  position of the peak of  its curve when $\alpha$ is large.

\begin{center}
\makeatletter
\def\@captype{figure}
\makeatother
\includegraphics[width=7.0cm, height=7.8cm]{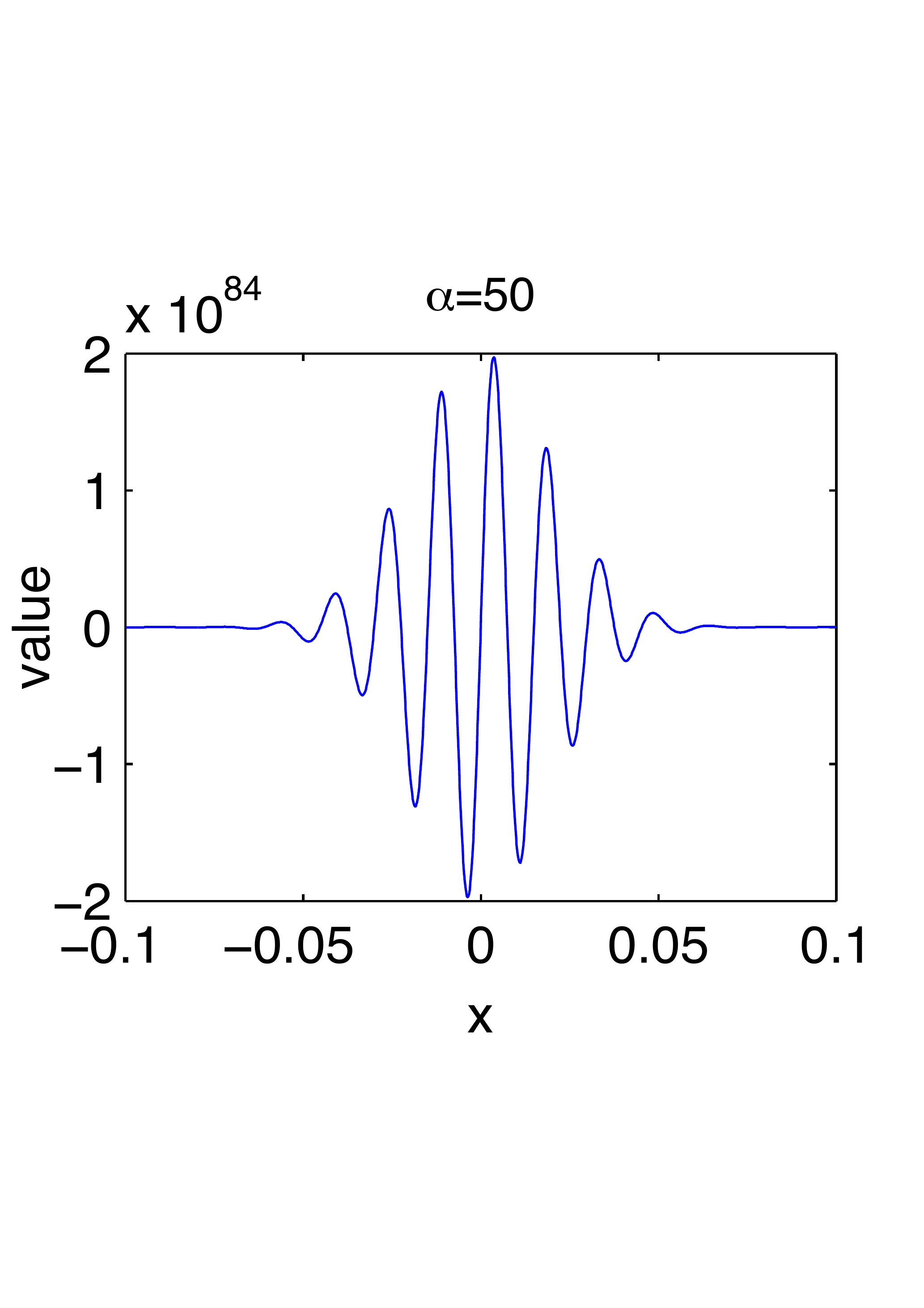}\ \hspace{2cm} \
\includegraphics[width=7.0cm, height=7.8cm]{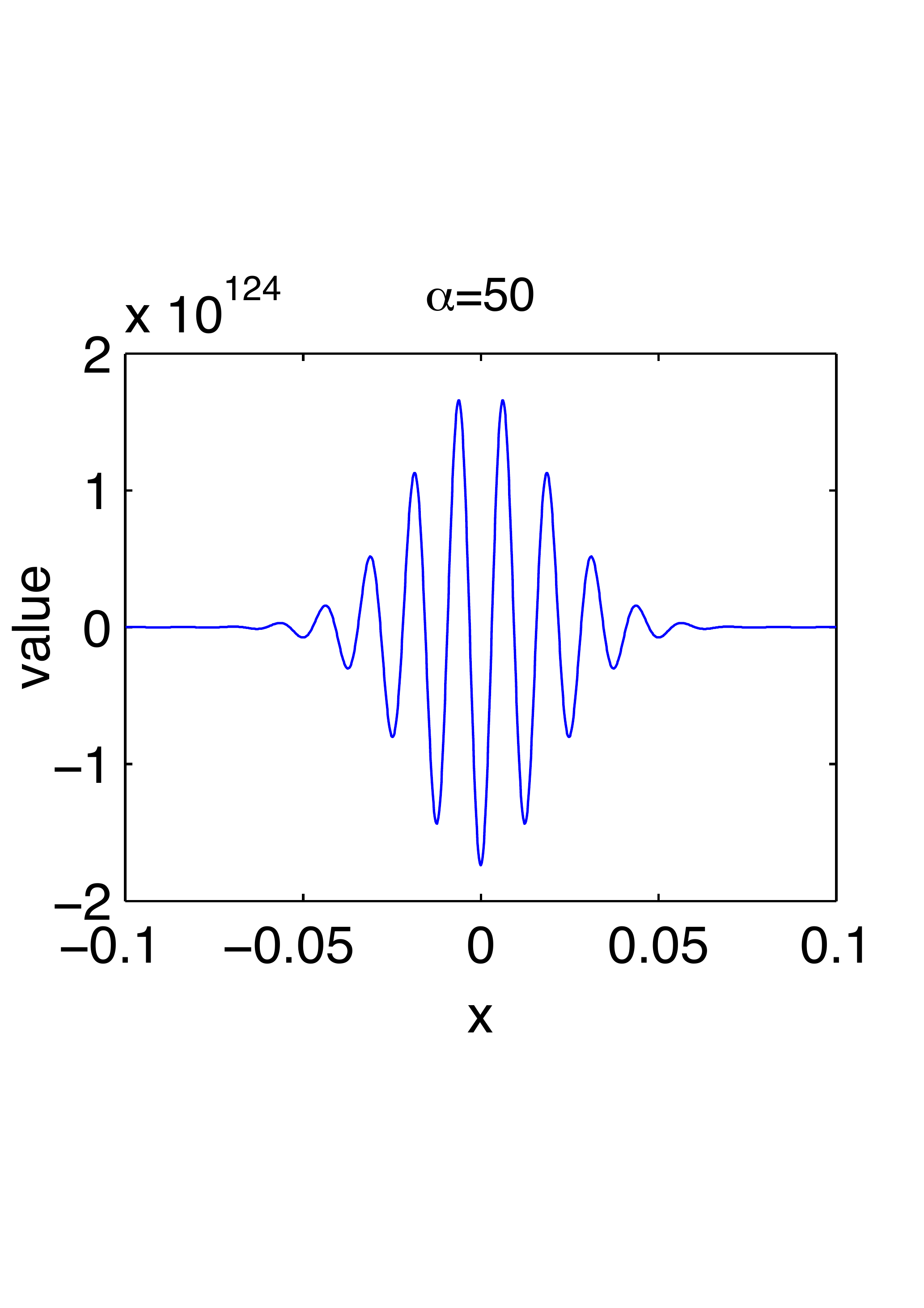}\\
\vspace{-4.5em}
\centerline{\small A: ~$m=35$ ~\hspace{7.3cm} B: ~$m=50$ }
\vspace{-1em}
{\small\caption{The derivatives of $G_\alpha$}
\label{Sec2:Fig3}}
\end{center}

\vspace{-2em}
\begin{center}
\makeatletter
\def\@captype{figure}
\makeatother
\includegraphics[width=7.0cm, height=8.5cm]{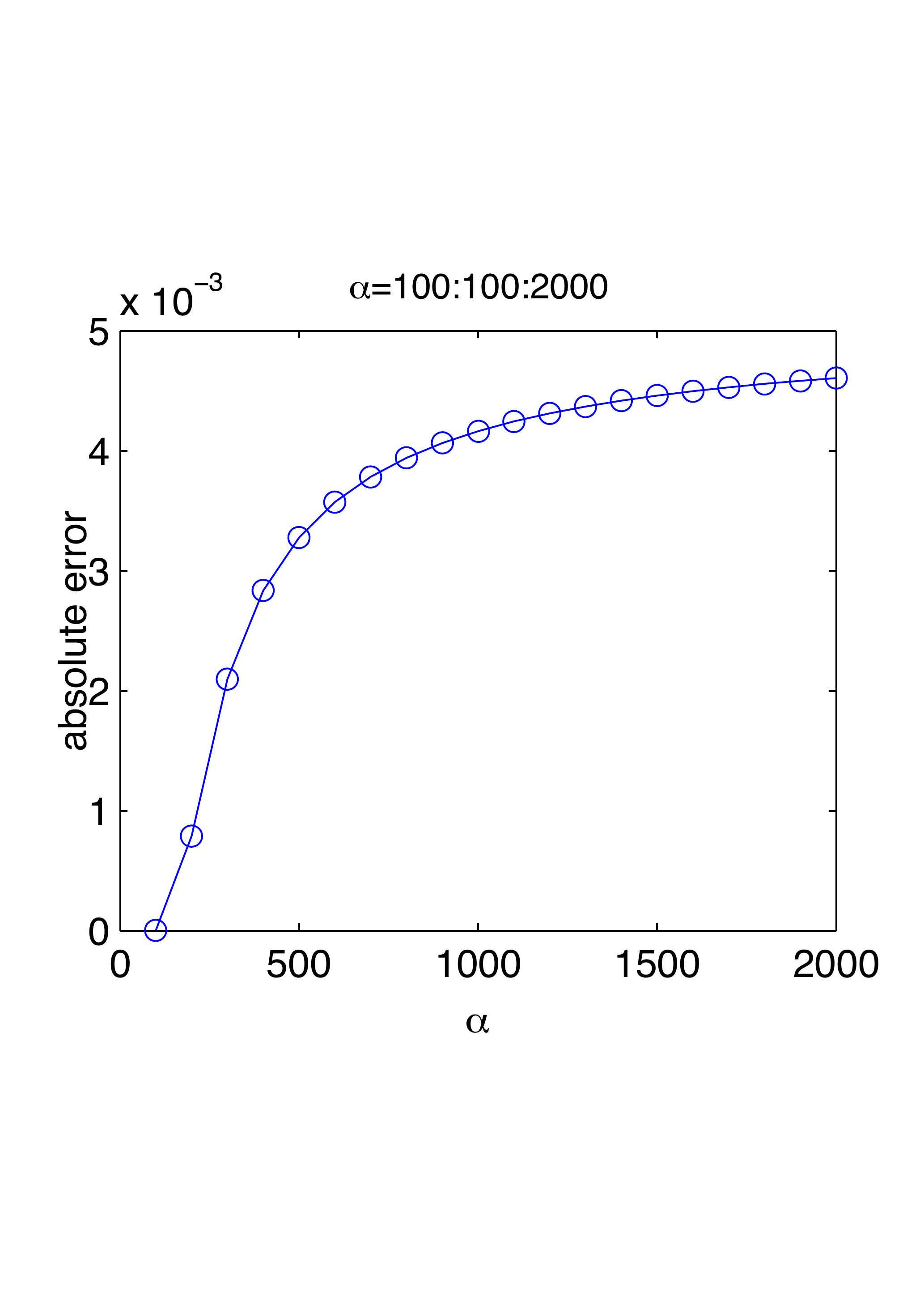}\ \hspace{1cm} \
\includegraphics[width=7.0cm, height=8.5cm]{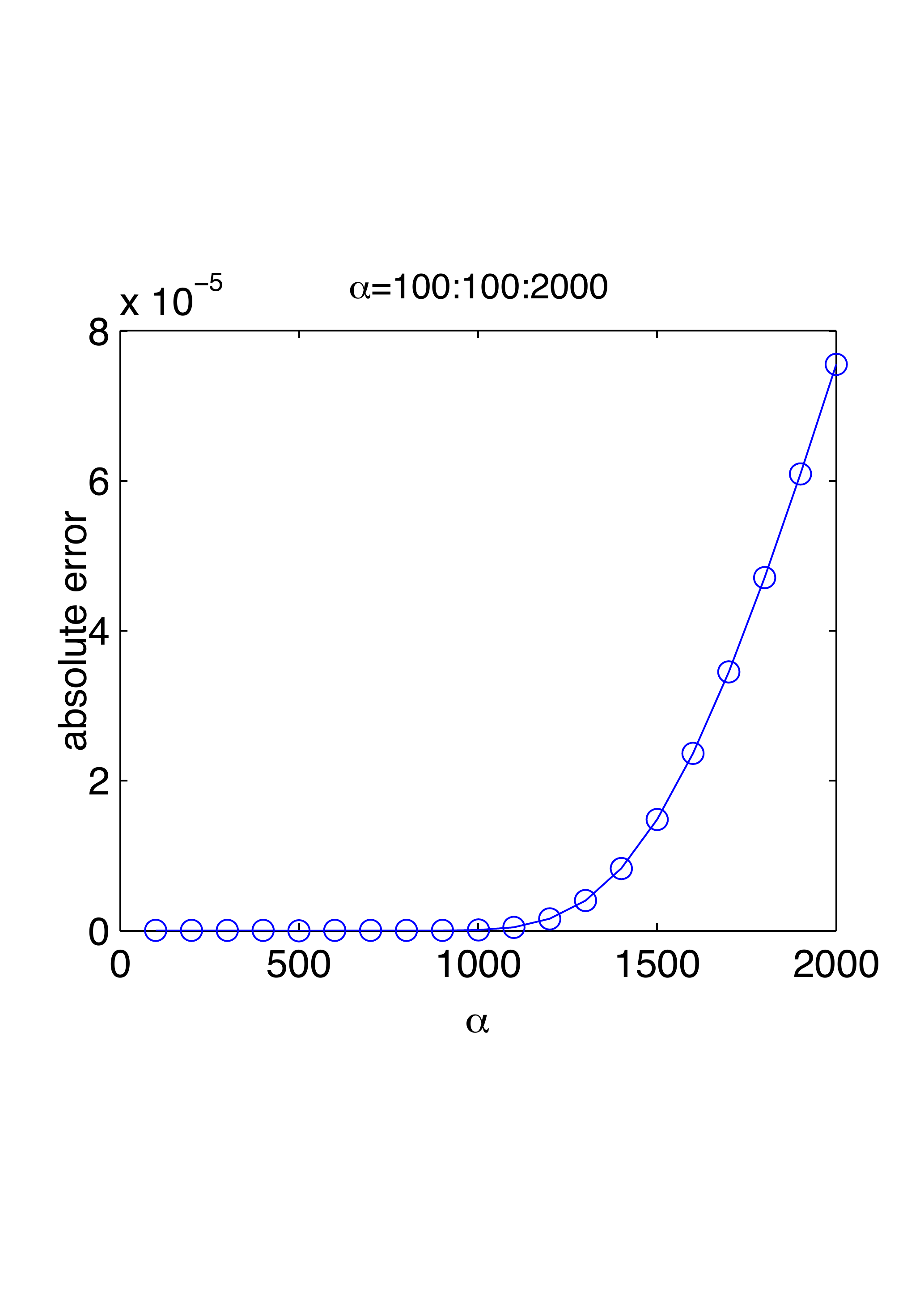}\\
\vspace{-4.5em}
\centerline{\small\ \ \ A: ~$n=100$ ~\hspace{6.5cm} B: ~$n=1000$}
\vspace{-1em}
{\small\caption{The AE of the approximation to the value of the integrals of $G_\alpha$ with fixed $n$}
\label{Sec2:Fig4}}
\end{center}

We now discuss difficulties in computing the integral \eqref{Sec2:Int} by using a traditional quadrature scheme.
As an example, we calculate the integral $\mathcal{I}_\alpha[f]$ with $f(x):=1$, $x\in I$, for different choices of $\alpha$, by employing the composite trapezoidal rule with $n$ equidistant nodes. One might expect that the trapezoidal rule would produce satisfactory numerical results since the Gaussian function is infinitely differentiable. However, we obtain rather disappointing numerical results. We plot in Figure \ref{Sec2:Fig4} the absolute error (AE) of the computed integral value
for (A) $n=100$ and (B) $n=1000$. From Figure \ref{Sec2:Fig4}, we see that for a fixed number of quadrature nodes, the accuracy of the quadrature formula decreases as $\alpha$ increases. We then plot in Figure \ref{Sec2:Fig5} the absolute error of  the computed integral value with fixed $\alpha$ for (A) $\alpha=100$, (B) $\alpha=1000$ and (C) $\alpha=10000$. We observe that in order to have certain order of accuracy, the number of quadrature nodes should increase proportional to the value of $\alpha$.

\vspace{-2.5em}
\begin{center}
\makeatletter
\def\@captype{figure}
\makeatother
\includegraphics[width=5.4cm, height=6.5cm]{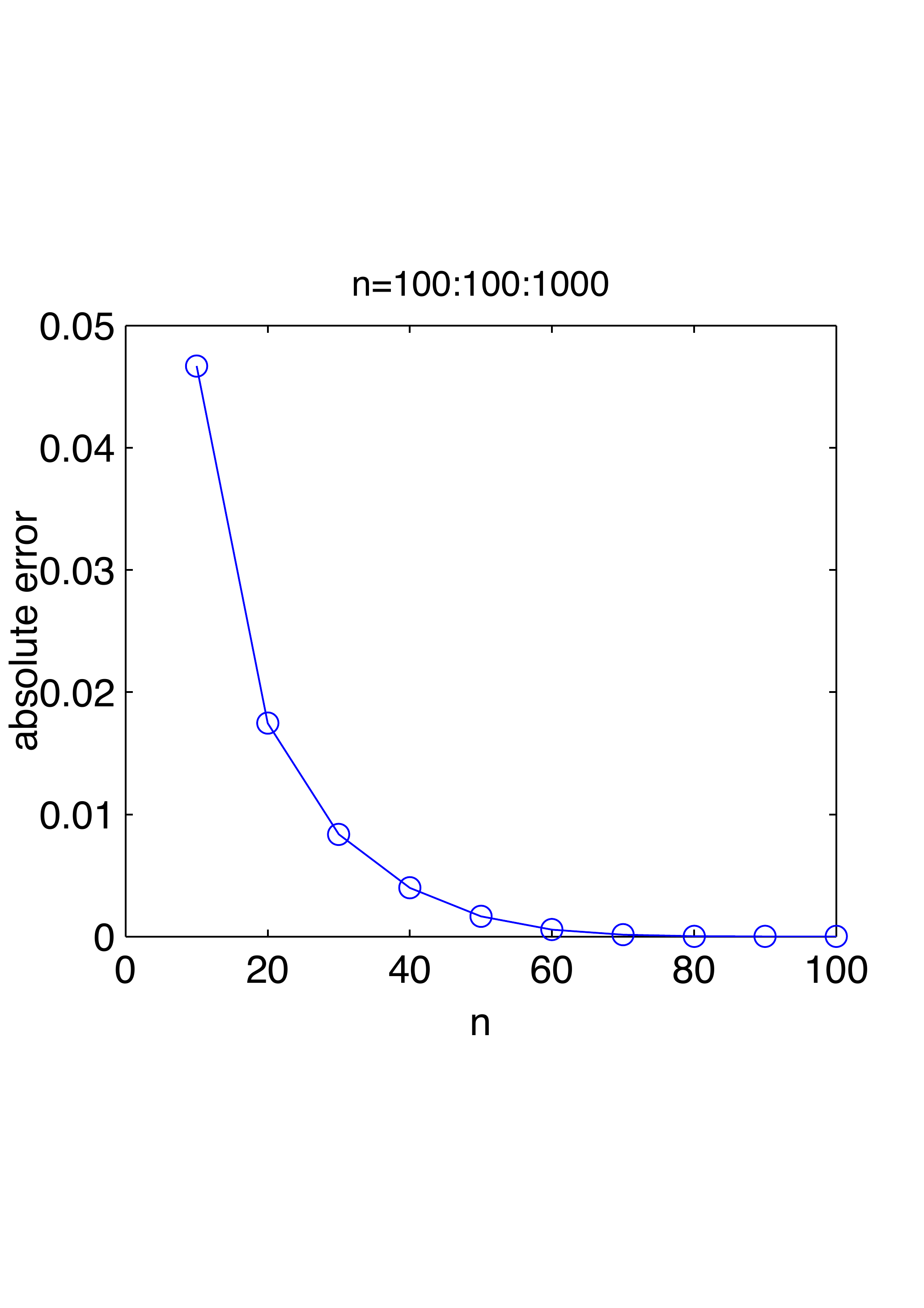}\hspace{0.6cm}
\includegraphics[width=5.4cm, height=6.5cm]{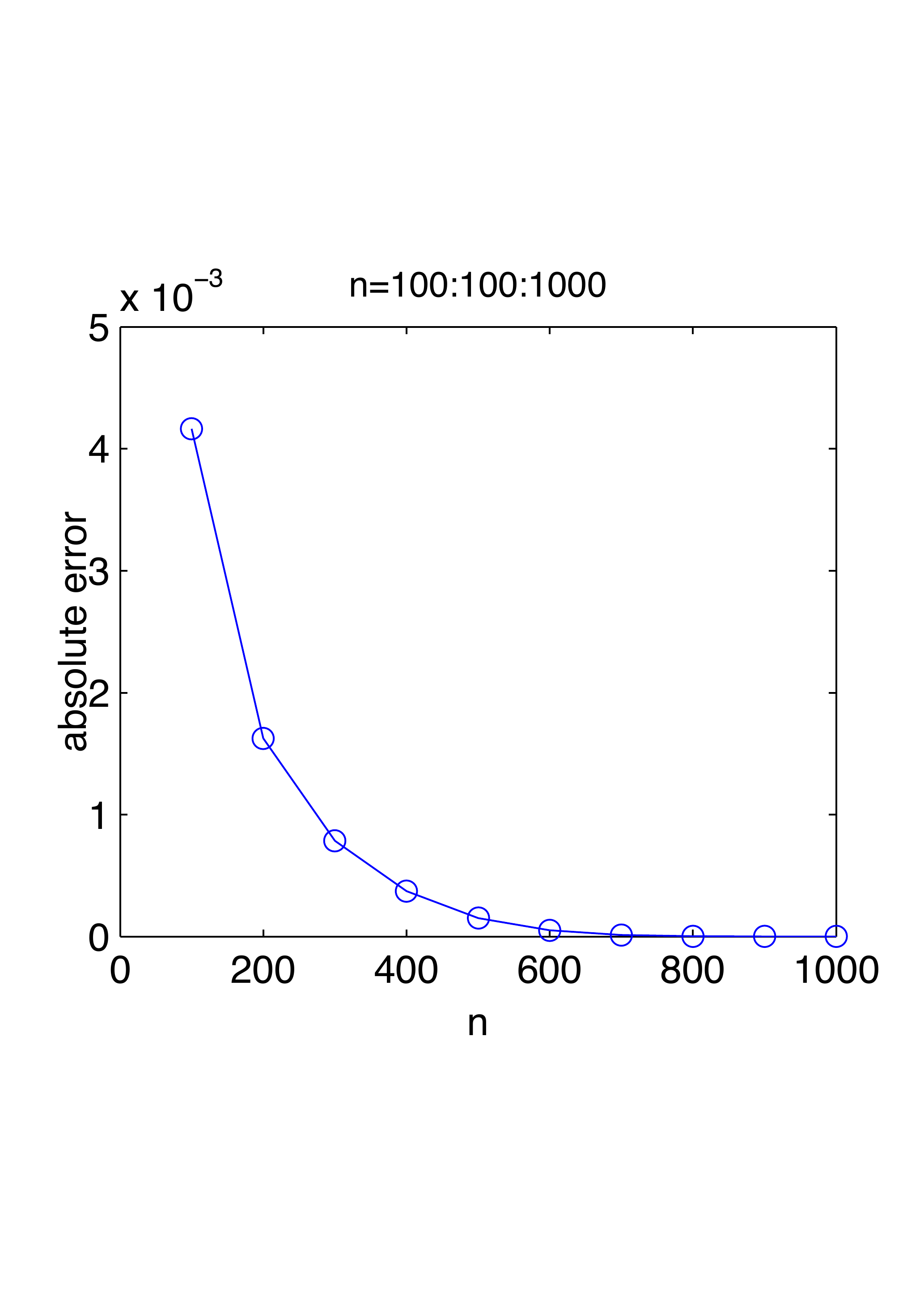}\hspace{0.6cm}
\includegraphics[width=5.4cm, height=6.5cm]{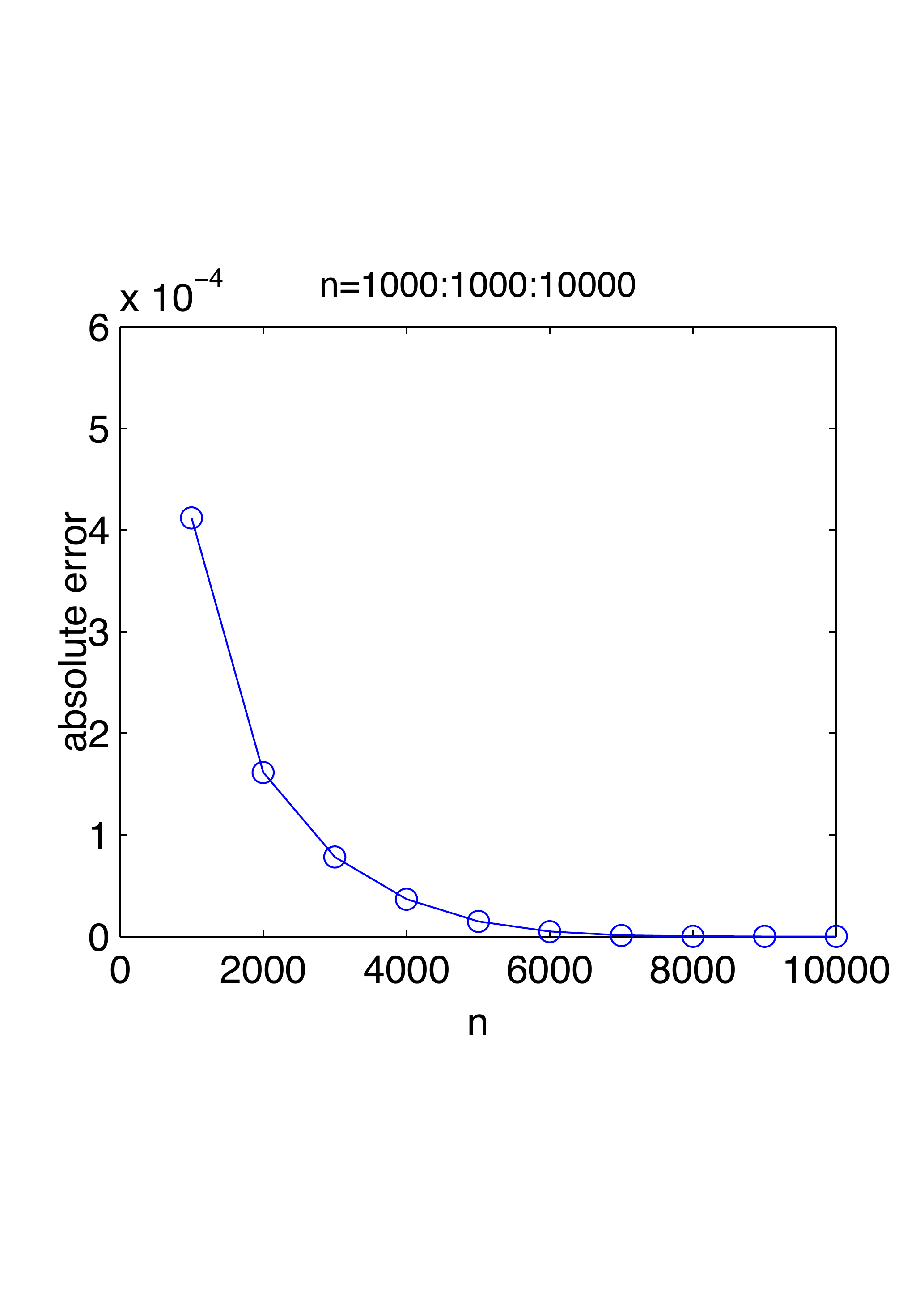}\\
\vspace{-3.5em}
\centerline{\small A:~ $\alpha=100$\hspace{4.2cm} B:~ $\alpha=1000$\hspace{4.2cm} C:~ $\alpha=10000$}
\vspace{-1em}
{\small\caption{The AE of the approximation to the value of the integrals of $G_\alpha$ with fixed $\alpha$
\label{Sec2:Fig5}}}
\end{center}

The numerical phenomena shown in the above numerical example can be well explained by the error representation of the quadrature formula. We now recall the error of the composite quadratures $\mathcal{I}_n^m[G_\alpha]$ for computing $\mathcal{I}_\alpha[f]$. We subdivide $I$ into $n$ equal subintervals and choose the closed Newton-Cotes formula with $m+1$ quadrature nodes to calculate the integrals defined on each subinterval. From the error of the Newton-Cotes formula \cite{Keller} together with  \eqref{Sec2:DerG},
we see that there exists  $\xi\in I$ such that
\begin{equation}\label{Sec2:Error}
\abs{\mathcal{I}_n^m[G_\alpha]-\mathcal{I}_\alpha[f]}
=c_{m}G_\alpha(\xi)
\abs{H_{m+\gamma}(\alpha\xi)}\left(\frac{\alpha}{n}\right)^{m+\gamma},  \ \ \gamma=1\   \text{for odd} \ m \ \text{and}\ \gamma=2\   \text{for even} \ m
\end{equation}
where $c_m$ are defined by
\begin{equation*}
c_m:=\frac{1}{(m+1)!m^{m+2}}
\begin{cases}
J_m, & \text{if} \ m \ \text{is odd},\\
(m(m+2))^{-1}J_m, &  \text{if} \ m \ \text{is even},
\end{cases}
\end{equation*}
with
\begin{equation*}
J_m:=
\begin{cases}
|\int_0^m\prod_{j=0}^m(t-j){\rm d}t|, & \text{if} \ m \ \text{is odd},\\
|\int_0^mt\prod_{j=0}^m(t-j){\rm d}t|, & \text{if} \ m \ \text{is even}.
\end{cases}
\end{equation*}
We introduce a complex number $q_m:=(-{\rm i})^m2^m/\sqrt{\pi}$ and a complex-valued function by 
$$
E_m(t):=\int_{-\infty}^{+\infty}{\rm e}^{-s^2+{\rm i}2ts}s^m{\rm d}s, \ \text{for}\ t\in\mathbb{R},
$$
where ${\rm i}$ denotes the  imaginary unit, see \cite{Abramowitz}. We then write
$$
H_m(t)=q_m{\rm e}^{t^2}E_m(t), \ \text{for}\ t\in\mathbb{R}.
$$
By the definition of $G_\alpha$, we have that
$$
G_\alpha(\xi)H_m(\alpha\xi)=q_mE_m(\alpha\xi).
$$
Substituting the equation above into \eqref{Sec2:Error} yields
$$
\abs{\mathcal{I}_n^m[G_\alpha]-\mathcal{I}_\alpha[f]}=c_m|q_m E_{m+\gamma}(\alpha\xi)|\left(\frac{\alpha}{n}\right)^{m+\gamma}, \ \ \gamma=1\  \text{for odd}\  m \ \text{and}\    \gamma=2  \ \text{for even} \  m.
$$
The error formula above shows that for the quadrature rule to converge, we have to choose $n$ larger than $\alpha$. This will results in large computation costs to compute the integral when $\alpha$ is large. This explain the reason why the trapezoidal rule applied to compute the integral of the Gaussian function does not converge when $\alpha$ is larger than $n$.

We conclude from the analysis above that using traditional quadrature formulas to calculate integrals
involved the Gaussian function becomes expensive to obtain satisfactory numerical results if $\alpha$ is large.
One reason is that the maximum (which occurs near zero) of the high order derivative of $G_\alpha$ is large.
We are required to add more number of quadrature nodes in order to improve accuracy of the numerical integration.
However, the values of the high order derivatives of $G_\alpha$ decay rapidly when moving away from zero  and have high oscillation near zero. Clearly,  adding more quadrature nodes uniformly in the integral interval increases computational costs.
It is therefore important to utilize the property of the Gaussian function to design efficient quadrature formulas for computing these integrals.

To close this section, we describe the main idea to be used in this paper in designing efficient quadrature formulas for computing \eqref{Sec2:Int}. Inspired by the quadrature formulas proposed in \cite{CMX, Xu1, Xu2, Schwab} for weakly singular integrals and in \cite{Xu3, Xu4} for highly oscillatory integrals, we shall develop quadrature formulas for \eqref{Sec2:Int} as follows. We fix a positive integer $n$, independent of the parameter $\alpha$. Design a graded mesh of $n$ subintervals according to $\alpha$. Choose a basic quadrature formula to be used for integration on each of the subintervals associated with the graded mesh. The graded mesh is designed in the way that the resulting integration errors on all the subintervals are approximately equal. We shall show that the accuracy of the proposed quadrature formulas for \eqref{Sec2:Int} increases as $\alpha$ increases and the number of functional evaluations of the integrand used in the quadrature formula is independent of $\alpha$.

In the remaining sections of this paper, we shall complete the following three tasks:

\begin{itemize}
\item [(i)]  Propose a quadrature formula for integrals involved $G_{\alpha,\beta}$ on the interval $[-1,1]$, where the Chebyshev points of the first kind are used as the quadrature nodes. This formula will serve as a basic quadrature formula for developing composite formulas for computing the integrals  \eqref{Sec2:Int}.

\item[(ii)]  Design a partition with a fixed number of subintervals of the integral interval $I$, with the break-points of the partition being distributed according to both the parameter $\alpha$ and the expected accuracy order. Develop composite quadrature formulas for computing the integrals \eqref{Sec2:Int}, one with accuracy of a polynomial order for a smooth factor $f$ having differentiability of a finite order, and another with accuracy of an exponential order for $f$ having infinite differentiability.

\item[(iii)] Develop a method for computing exactly the moments of the basic quadrature formula with the help of the error function.
\end{itemize}

\section{A Basic Quadrature Formula}

We describe in this section a basic quadrature formula for computing the integral
\begin{equation}
\label{Sec3:IntStand} \mathcal{I}_{\alpha,\beta}[f]:=\int_{\hat{I}}f(x)G_{\alpha,\beta}(x){\rm d}x,
\end{equation}
with $\hat{I}:=[-1,1]$,  where $f\in C(\hat{I})$ is independent of $\alpha$ and $G_{\alpha,\beta}$ is defined by \eqref{Sec2:Gauss} for $\alpha>1$. The quadrature formula is designed by interpolating  $f$ at the Chebyshev points of the first kind \cite{Cheney,Davis,Shen,Kuan} and calculating the weights of the resulting formula exactly. The advantage of interpolating at the Chebyshev points is to avoid the well-known Runge phenomenon. The quadrature formula will be used as a basic rule  to design the composite quadrature formula for the integral $\mathcal{I}_\alpha[f]$ defined by \eqref{Sec2:Int} in the following section.

The basic numerical quadrature formula for computing the integral \eqref{Sec3:IntStand} is derived by replacing the function $f$ in the integral by its Lagrange interpolation polynomial.
For a fixed positive integer $m\in\mathbb{N}$, we denote by $p_m$ the Lagrange interpolation polynomial of degree $m$, which interpolates $f$ at the Chebyshev points of the first kind
\begin{equation*}
 x_j:=\cos\left((2j+1)\pi/(2m+2)\right),
\end{equation*}
for $j\in\mathbb{Z}_{m}:=\set{0,1,\ldots,m}$. We then obtain the quadrature formula
\begin{equation}\label{Sec3:QR1}
  \mathcal{Q}^{\alpha,\beta}_m[f]:=\mathcal{I}_{\alpha,\beta}[p_m]
\end{equation}
by replacing the function $f$ in the integral \eqref{Sec3:IntStand} with $p_m$.
We now express the Lagrange polynomial $p_m$ in terms of the Chebyshev polynomials of the first kind $T_n$ for $n\in\mathbb{N}_0$. Namely,
\begin{equation*}
p_m(x)=\mathop{{\sum}'}_{ j\in\mathbb{Z}_m}c_j(f)T_j(x),\ \ \text{for}\ \ x\in I,
\end{equation*}
where the prime denotes a sum whose  first term is halved and for $j\in\mathbb{Z}_m$
\begin{equation*}
c_j (f):=\dfrac{2}{m+1}\sum_{j\in\mathbb{Z}_m}f(x_j)T(x_j).
\end{equation*}
The formula \eqref{Sec3:QR1} is rewritten  as
\begin{equation*}
  \mathcal{Q}^{\alpha,\beta}_m[f]:=\mathop{{\sum}'}_{ j\in\mathbb{Z}_m}c_j(f)\mathcal{I}_{\alpha,\beta}[T_j].
\end{equation*}
Upon introducing the numbers
$$
A_{j,k}:=(-1)^k\frac{(j-k-1)!}{k!(j-2k)!}2^{j-2k-1},
$$
we have that \cite{Alexander}
\begin{equation*}
T_n(x)=n\sum_{k\in\mathbb{Z}_{\lfloor\frac{n}{2}\rfloor}}A_{n,k} x^{n-2k}, ~\text{for}~n\in\mathbb{N},
\end{equation*}
where $\lfloor a \rfloor$ denotes the biggest integer not larger than $a$. Let
$q_j(x):=x^j$,  for  $x\in\hat{I}$,  $j\in\mathbb{Z}_{m}$. We introduce the moments
\begin{equation}\label{Sec3:weight}
w_{\alpha,\beta,j}:=\mathcal{I}_{\alpha,\beta}[q_j], ~~\text{for}~ ~ j\in\mathbb{Z}_{m}.
\end{equation}
Formula \eqref{Sec3:QR1} may be reexpressed in terms of the moments $w_{\alpha,\beta,j}$ as
\begin{equation}\label{QQQs}
\mathcal{Q}^{\alpha,\beta}_m[f]=\mathop{{\sum}'}_{ j\in\mathbb{Z}_m}jc_j(f)\sum_{k\in\mathbb{Z}_{\lfloor\frac{j}{2}\rfloor}}A_{j,k}w_{\alpha,\beta,j-2k}.
\end{equation}
An approximate value of the integral may be computed according to formula \eqref{QQQs} once the moments $w_{\alpha,\beta,j}$ are available. We postpone the computation of the moments $w_{\alpha,\beta,j}$ until Section 5.
Formula \eqref{QQQs} is the basic quadrature formula we shall use in the rest of this paper.

In the next lemma, we present
an estimate of the error of the basic quadrature formula
\begin{equation*}
\mathcal{E}^{\alpha,\beta}_m[f]:=\abs{\mathcal{I}_{\alpha,\beta}[f]-\mathcal{Q}^{\alpha,\beta}_m[f]}.
\end{equation*}
We let  $\norm{\varphi}_\infty:=\max\set{\abs{\varphi(x)}: x\in \Omega}$ for a bounded function $\varphi$ defined in $\Omega$.

\begin{lemma}\label{lem:Sec3}
  If $f\in C^{m+1}(\hat{I})$ for some $m\in\mathbb{N}$ and $\alpha>1$, then
  \begin{equation*}
  \mathcal{E}^{\alpha,\beta}_{m}[f]\leq \dfrac{\sqrt{\pi}}{2^m(m+1)!\alpha}\norm{f^{(m+1)}}_{\infty}.
  \end{equation*}
\end{lemma}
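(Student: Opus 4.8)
The plan is to use the fact that $\mathcal{Q}^{\alpha,\beta}_m[f]=\mathcal{I}_{\alpha,\beta}[p_m]$ integrates the interpolant exactly against the Gaussian weight, so the quadrature error is nothing but the integral of the interpolation remainder. First I would write
$$\mathcal{E}^{\alpha,\beta}_{m}[f]=\abs{\mathcal{I}_{\alpha,\beta}[f-p_m]}\leq\int_{\hat{I}}\abs{f(x)-p_m(x)}\,G_{\alpha,\beta}(x)\,{\rm d}x,$$
where the inequality is the triangle inequality for integrals together with the positivity $G_{\alpha,\beta}>0$. This reduces the whole estimate to controlling the pointwise interpolation error and the mass of the weight separately.

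Next I would insert the classical Lagrange interpolation error formula: since $f\in C^{m+1}(\hat{I})$, for each $x\in\hat{I}$ there is $\xi_x\in\hat{I}$ with
$$f(x)-p_m(x)=\frac{f^{(m+1)}(\xi_x)}{(m+1)!}\prod_{j\in\mathbb{Z}_m}(x-x_j).$$
The crucial point, and the reason the Chebyshev points of the first kind are chosen as nodes, is that the $x_j$ are exactly the zeros of the Chebyshev polynomial $T_{m+1}$. Hence the nodal polynomial, being monic of degree $m+1$, is the rescaling $\prod_{j\in\mathbb{Z}_m}(x-x_j)=2^{-m}T_{m+1}(x)$, recalling that $T_{m+1}$ has leading coefficient $2^m$. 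Combined with the uniform bound $\abs{T_{m+1}(x)}\leq 1$ for $x\in\hat{I}$, this gives the pointwise estimate
$$\abs{f(x)-p_m(x)}\leq\frac{\norm{f^{(m+1)}}_{\infty}}{2^m(m+1)!},\quad x\in\hat{I}.$$

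Finally I would bound the remaining weight integral by enlarging the domain to the whole real line, for which the Gaussian integrates exactly:
$$\int_{\hat{I}}G_{\alpha,\beta}(x)\,{\rm d}x\leq\int_{-\infty}^{+\infty}{\rm e}^{-\alpha^2(x-\beta)^2}\,{\rm d}x=\frac{\sqrt{\pi}}{\alpha}.$$
Multiplying the pointwise bound by this mass estimate yields the claimed inequality. All three steps are routine; the only point demanding care is the identification of the nodal polynomial with $2^{-m}T_{m+1}$ and the sharp uniform bound $\abs{T_{m+1}}\leq 1$ on $[-1,1]$, which is precisely what converts the Chebyshev node choice into the factor $2^{-m}$. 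I would also remark that the resulting bound is independent of the shift $\beta$, because the full-line Gaussian integral is translation invariant, and that the hypothesis $\alpha>1$ is used only insofar as the bound improves as $\alpha$ grows.
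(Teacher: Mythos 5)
Your proposal is correct and follows essentially the same route as the paper: bound the quadrature error by the product of the uniform interpolation error and the total Gaussian mass $\int_{\hat I}G_{\alpha,\beta}\leq\sqrt{\pi}/\alpha$, then invoke the Chebyshev interpolation bound $\norm{f-p_m}_\infty\leq 2^{-m}\norm{f^{(m+1)}}_\infty/(m+1)!$. The only difference is that you derive that interpolation bound from the Lagrange remainder and the monic nodal polynomial $2^{-m}T_{m+1}$, whereas the paper simply cites it as a known estimate.
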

\begin{proof}
By the definition of the error $\mathcal{E}^{\alpha,\beta}_{m}[f]$, we observe that
\begin{equation}\label{basic-estimate}
 \mathcal{E}^{\alpha,\beta}_{m}[f]\leq \norm{f-p_m}_{\infty}\mathcal{I}_{\alpha,\beta}[g]
\end{equation}
with $g(x):=1$ for $x\in\hat{I}$.  According to \cite{Gradshteyn}, we have that
\begin{equation}\label{Sec3:proof}
\mathcal{I}_{\alpha,\beta}[g]\leq\int_{-\infty}^{\infty}G_{\alpha,\beta}(x){\rm d}x= \sqrt{\pi}/\alpha.
\end{equation}
We recall that the error of the Lagrange interpolating of degree $m\in\mathbb{N}$ at the  Chebyshev points of the first  kind for a function $f\in C^{m+1}(\hat{I})$ is bounded by
\begin{equation}\label{Sec3:EChebyInterp}
 \norm{f-p_m}_{\infty}\leq\dfrac{1}{2^m(m+1)!}\norm{f^{(m+1)}}_{\infty}.
\end{equation}
Substituting both \eqref{Sec3:proof} and \eqref{Sec3:EChebyInterp} into the right hand side of \eqref{basic-estimate}  yields the desired estimate.
\end{proof}

To close this section, we comment on the relationship between the parameter $\alpha$
and $\mathcal{I}_\alpha[f]$ defined by \eqref{Sec2:Int} for a bounded function $f$.
From \eqref{Sec3:proof}, we see that  $\mathcal{I}_\alpha[f]$  decays to zero
not slower than $\mathcal{O}(\alpha^{-1})$ as $\alpha\to\infty$. Hence, if we approximate the integral $\mathcal{I}_\alpha[f]$ simply by the number zero, its error is bounded by $\mathcal{O}(\alpha^{-1})$. We may say that any approximation to $\mathcal{I}_\alpha[f]$ is less accurate than simply using the number zero is useless.
We are required to design quadrature formulas for computing $\mathcal{I}_\alpha[f]$ with accuracy better than the zero. 

\section{Quadrature Formulas Based on the Graded Mesh}

We develop in this section composite quadrature formulas for computing the integrals \eqref{Sec2:Int} for the case $f\in C(I)$ is independent of  $\alpha$ and $\alpha>1$ (that is, the standard deviation $c_0$ of the Gaussian function is less than $1/\sqrt{2}$).
The composite quadrature formulas are developed based on graded meshes determined by $\alpha$ and the expected accuracy order of the formulas. Two types of composite quadrature formulas are proposed, one with accuracy of a polynomial order for a smooth factor $f$ having smoothness of a finite order, and the other with accuracy of an exponential order for $f$ having smoothness of the infinite order.

We first motivate the design of the graded mesh of the integral interval $I$. Ideally, the break-points of the graded mesh should be distributed so that the resulting quadrature formula has equal errors on all the subintervals. Specifically, we let $n\in\mathbb{N}$ and suppose that $I$ is partitioned by $0=x_0<x_1<\ldots< x_n=1$ with the break-points $x_j$ to be determined.
By $E_j$ we denote the error of the  quadrature formula on the subinterval $I_j:=[x_{j-1}, x_j]$, $j\in \mathbb{Z}_{n}^+:=\{1,2, \ldots, n\}$. That is,
\begin{equation*}
E_j=\int_{-1}^{1}\abs{h_jf(h_jx/2+(x_j+x_{j+1})/2)/2-p_m(x)}G_\alpha(h_jx/2+(x_j+x_{j+1})/2){\rm d}x,
\end{equation*}
where $h_j:=x_{j}-x_{j-1}$ and $p_m$ is the Lagrange interpolation polynomial  of degree $m\in\mathbb{N}$ at the Chebyshev points of the first  kind to approximate $h_jf(h_jx/2+(x_j+x_{j+1})/2)/2$ for $x\in\hat{I}$.
The best choice of the break-points of the graded-mesh would ensure that the errors $E_j$ are all equal.
However, it is not possible to have the explicit form of $x_j$. We then appeal to conservative upper bounds of $E_j$. From \eqref{Sec3:EChebyInterp}
we obtain upper bounds of the error on all subintervals
\begin{eqnarray}\label{Sec4:0}
E_j\leq \dfrac{G_\alpha(x_j)h_j^{m+2}}{2^{2m+1}(m+1)!}\norm{f^{(m+1)}}_{\infty},
~\text{for}~j\in\mathbb{Z}_{n}^+.
\end{eqnarray}
One strategy to choose
the break-points of the graded-mesh is to solve the following system of equations for $x_j$
\begin{equation}\label{Sec4:000}
G_\alpha(x_j)h_j^{m+2}=G_\alpha(x_{j+1})h_{j+1}^{m+2}, ~\text{for}~j\in\mathbb{Z}_{n-1}^+.
\end{equation}
Again, it is difficult to solve the system explicitly. Instead of solving \eqref{Sec4:000} exactly,
motivated by the idea proposed in \cite{Xu3,Xu4} for computing an oscillatory integral, we propose a strategy to partition the interval $I$. Specifically,
for $n\in\mathbb{N}$ with $n>1$, we partition the interval $I$ with the break-points defined by
\begin{equation}\label{Sec4:Partition}
  x_0=0,~x_j=\alpha^{(j-1)/(n-1)-1}, ~~\text{for}~j\in\mathbb{Z}_n^+.
\end{equation}
This choice of the break-points ensures that equations  \eqref{Sec4:000} are approximately satisfied.

The proposed quadrature formulas are based on the graded-mesh \eqref{Sec4:Partition} with a transformation  
mapping each of the subintervals $[x_{j-1}, x_{j}]$ onto $\hat{I}:=[-1,1]$ and using the basic quadrature formula to compute the resulting integrals on $\hat{I}$. Specifically, we use the affine transformation $x\mapsto h_jx/2+(x_{j-1}+x_j)/2$ to map
$[x_{j-1},x_j]$ onto $\hat {I}$.
The integral \eqref{Sec2:Int} may be written as accordingly
\begin{equation}\label{Sec4:Int}
  \mathcal{I}_\alpha[f]=\sum_{j\in\mathbb{Z}_n^+}\mathcal{I}_{\alpha_j,\beta_j}[f_j],
\end{equation}
where  for $j\in\mathbb{Z}_n^+$, 
\begin{equation}\label{Sec4:alpha}
\alpha_j:=\alpha h_j/2,
\end{equation}
\begin{equation}\label{Sec4:beta}
\beta_j:=-(x_{j-1}+x_j)/h_j,
\end{equation}
and
\begin{equation}\label{Sec4:f}
f_j(x):=\frac{h_j}{2}f\left(\frac{h_j}{2}x+\frac{x_{j-1}+x_j}{2}\right), ~~ \text{for}~ x\in\hat{I}.
\end{equation}
Computing the integral \eqref{Sec2:Int} is then reduced to calculate
the integrals $\mathcal{I}_{\alpha_j,\beta_j}[f_j]$ for $j\in\mathbb{Z}_n^+$ in \eqref{Sec4:Int},
which have the form of \eqref{Sec3:IntStand}. Approximate values of these integrals will be computed by using formula
\eqref{Sec3:QR1}.

We now develop two composite quadrature formulas for computing the integral \eqref{Sec4:Int} by
using the quadrature formula \eqref{Sec3:QR1} to calculate the integrals
$\mathcal{I}_{\alpha_j,\beta_j}[f_j]$ for $j\in\mathbb{Z}_n^+$.  In the first method,
we use the same  number  of quadrature nodes in all of the subintervals. Selecting a fixed positive integer $m$, we use
$\mathcal{Q}^{\alpha_j,\beta_j}_m[f_j]$ defined as in \eqref{Sec3:QR1} to approximate
the integral $\mathcal{I}_{\alpha_j,\beta_j}[f_j]$ for $j\in\mathbb{Z}_n^+$.
The integral \eqref{Sec4:Int} is then approximated by
\begin{equation}\label{alm:Sec4P}
  \mathcal{Q}^{\alpha}_{n,m}[f]:=\sum_{j\in\mathbb{Z}_{n}^+}\mathcal{Q}^{\alpha_j,\beta_j}_m[f_j].
\end{equation}

We next analyze the accuracy order of the quadrature formula  \eqref{alm:Sec4P}.
To this end, in the following lemma we estimate the error
\begin{equation*}
\mathcal{E}^{\alpha_j,\beta_j}_{m}[f_j]:=\abs{\mathcal{I}_{\alpha_j,\beta_j}[f_j]-\mathcal{Q}^{\alpha_j,\beta_j}_{m}[f_j]}, \ \ j\in\mathbb{Z}_n^+.
\end{equation*}

\begin{lemma}\label{lem:Sec4subQR}
  If $f\in C^{m+1}(I)$ and $\alpha>1$, then for $j\in\mathbb{Z}_n^+$
  \begin{equation*}
    \mathcal{E}^{\alpha_j,\beta_j}_{m}[f_j]\leq  \dfrac{\sqrt{\pi}h_j^{m+1}}{2^{2m+1}(m+1)!\alpha}\norm{f^{(m+1)}}_{\infty}.
  \end{equation*}
\end{lemma}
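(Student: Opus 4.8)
The plan is to reduce this subinterval estimate to the basic error bound of Lemma~\ref{lem:Sec3}. By construction the quadrature $\mathcal{Q}^{\alpha_j,\beta_j}_m[f_j]$ is exactly the basic formula \eqref{Sec3:QR1} applied to the transformed integrand $f_j$ with the transformed parameters $\alpha_j$ and $\beta_j$, so $\mathcal{E}^{\alpha_j,\beta_j}_m[f_j]$ is precisely the basic interpolation error governed by that lemma. Since $f\in C^{m+1}(I)$ and the affine change of variables is smooth, $f_j\in C^{m+1}(\hat{I})$, and applying Lemma~\ref{lem:Sec3} with $f_j$ in place of $f$ and $\alpha_j$ in place of $\alpha$ would give
\begin{equation*}
\mathcal{E}^{\alpha_j,\beta_j}_m[f_j]\leq \frac{\sqrt{\pi}}{2^m(m+1)!\,\alpha_j}\norm{f_j^{(m+1)}}_\infty.
\end{equation*}

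The second step is to express the two factors $\alpha_j$ and $\norm{f_j^{(m+1)}}_\infty$ in terms of the original data. Differentiating the definition \eqref{Sec4:f} of $f_j$ by the chain rule $m+1$ times pulls out a factor $(h_j/2)^{m+1}$ from the argument together with the prefactor $h_j/2$, so that $f_j^{(m+1)}(x)=(h_j/2)^{m+2}f^{(m+1)}(h_jx/2+(x_{j-1}+x_j)/2)$ for $x\in\hat{I}$. Because the affine map carries $\hat{I}=[-1,1]$ onto $I_j=[x_{j-1},x_j]\subseteq I$, taking the supremum yields $\norm{f_j^{(m+1)}}_\infty\leq(h_j/2)^{m+2}\norm{f^{(m+1)}}_\infty$, where the right-hand norm is now over all of $I$. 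Substituting this together with $\alpha_j=\alpha h_j/2$ from \eqref{Sec4:alpha} into the bound above, the $h_j$ factors combine as $h_j^{-1}\cdot h_j^{m+2}=h_j^{m+1}$ and the powers of two collect to $2^{-m}\cdot 2\cdot 2^{-(m+2)}=2^{-(2m+1)}$, giving exactly the claimed estimate.

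There is no genuine obstacle here; the argument is a scaling computation resting on Lemma~\ref{lem:Sec3}. The one subtlety I would guard against is the admissibility of invoking that lemma with the parameter $\alpha_j$: since $\alpha_j=\alpha h_j/2$ and $h_j$ is small on the leftmost subintervals (for instance $h_1=\alpha^{-1}$ under the mesh \eqref{Sec4:Partition}, giving $\alpha_1=1/2$), we may well have $\alpha_j\le 1$, whereas Lemma~\ref{lem:Sec3} is stated for $\alpha>1$. This is harmless, however, because the proof of that lemma uses only the bound $\mathcal{I}_{\alpha,\beta}[g]\leq\sqrt{\pi}/\alpha$ from \eqref{Sec3:proof}, which holds for every positive parameter, so the conclusion is in fact valid for any $\alpha_j>0$. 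I would note this explicitly so the application is rigorous, and the remainder is routine.
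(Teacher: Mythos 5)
Your proof is correct and follows essentially the same route as the paper's: apply Lemma \ref{lem:Sec3} to $f_j$ with parameter $\alpha_j=\alpha h_j/2$, compute $f_j^{(m+1)}$ by the chain rule to pull out the factor $(h_j/2)^{m+2}$, and combine the powers of $h_j$ and $2$. Your added remark that $\alpha_j$ may violate the hypothesis $\alpha>1$ of Lemma \ref{lem:Sec3} (e.g.\ $\alpha_1=1/2$ under the mesh \eqref{Sec4:Partition}) but that the lemma's proof only uses the bound $\mathcal{I}_{\alpha,\beta}[g]\le\sqrt{\pi}/\alpha$, valid for every $\alpha>0$, addresses a point the paper silently passes over and is worth making explicit.
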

\begin{proof}
 We prove this lemma by applying Lemma \ref{lem:Sec3} to integral $\mathcal{I}_{\alpha_j,\beta_j}[f_j]$.
 According to  the definition \eqref{Sec4:alpha} of $\alpha_j$, we have for $j\in\mathbb{Z}_n^+$ that
 \begin{equation*}
  \mathcal{E}^{\alpha_j,\beta_j}_{m}[f_j]\leq \dfrac{\sqrt{\pi}}{2^{m}(m+1)!\alpha_j}\norm{f_j^{(m+1)}}_{\infty}
 \leq\dfrac{\sqrt{\pi}}{2^{m-1}(m+1)!\alpha h_j}\norm{f_j^{(m+1)}}_{\infty}.
 \end{equation*}
 By the chain rule, we obtain for $x\in\hat{I}$ that
 \begin{equation*}
  f_j^{(m+1)}(x)=\left(\dfrac{h_j}{2}\right)^{m+2}f^{(m+1)}\left(\frac{h_j}{2}x+\frac{x_{j-1}+x_j}{2}\right),
 \end{equation*}
 where $f_j$ is defined by \eqref{Sec4:f}. This together with the inequality above yields the desired estimate.
\end{proof}

We are now ready to provide the error estimate for the quadrature formula \eqref{alm:Sec4P}.
For $m$, $n\in\mathbb{N}$
with $n>1$,
we define
\begin{equation*}
\mathcal{E}^{\alpha}_{n,m}[f]:=\abs{\mathcal{I}_{\alpha}[f]-\mathcal{Q}^{\alpha}_{n,m}[f]},
\end{equation*}
 and denote by $\mathcal{N}(\mathcal{Q}^{\alpha}_{n,m}[f])$ the  number
of the quadrature nodes used in  $\mathcal{Q}^{\alpha}_{n,m}[f]$.

\begin{theorem}\label{thm:Sec4P}
For $\alpha>1$ and $n\in\mathbb{N}$ with $n>1$, let $\eta:=\max\set{1/\alpha,1-\alpha^{-1/(n-1)}}$. If $f\in C^{m+1}(I)$ for some $m\in\mathbb{N}$, then
\begin{equation*}\label{Sec4:P}
\mathcal{E}^{\alpha}_{n,m}[f]\leq  \dfrac{\sqrt{\pi}\eta^{m}}{2^{2m+1}(m+1)!\alpha}\norm{f^{(m+1)}}_{\infty},
\end{equation*}
and  $$\mathcal{N}\left(\mathcal{Q}^{\alpha}_{n,m}[f]\right)=(m+1)n.$$
\end{theorem}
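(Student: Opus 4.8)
The plan is to reduce the global error to a sum of the local errors already controlled by Lemma~\ref{lem:Sec4subQR}, and then to bound the resulting sum of powers of the mesh sizes $h_j$ by exploiting the fact that these mesh sizes add up to the length of $I$. By the decomposition \eqref{Sec4:Int} and the definition \eqref{alm:Sec4P}, the difference $\mathcal{I}_\alpha[f]-\mathcal{Q}^\alpha_{n,m}[f]$ equals $\sum_{j\in\mathbb{Z}_n^+}\big(\mathcal{I}_{\alpha_j,\beta_j}[f_j]-\mathcal{Q}^{\alpha_j,\beta_j}_m[f_j]\big)$, so the triangle inequality gives $\mathcal{E}^\alpha_{n,m}[f]\leq\sum_{j\in\mathbb{Z}_n^+}\mathcal{E}^{\alpha_j,\beta_j}_m[f_j]$. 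Substituting the per-subinterval bound of Lemma~\ref{lem:Sec4subQR} into each summand and pulling out the $j$-independent factor $\frac{\sqrt{\pi}}{2^{2m+1}(m+1)!\alpha}\norm{f^{(m+1)}}_\infty$, the entire estimate reduces to proving that $\sum_{j\in\mathbb{Z}_n^+}h_j^{m+1}\leq\eta^m$.

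The key step is to notice that the step sizes telescope: since $x_0=0$ and $x_n=1$ in \eqref{Sec4:Partition}, we have $\sum_{j\in\mathbb{Z}_n^+}h_j=1$. Writing $h_j^{m+1}=h_j^m\cdot h_j$ and using $h_j^m\leq(\max_k h_k)^m$ then yields $\sum_{j\in\mathbb{Z}_n^+}h_j^{m+1}\leq(\max_k h_k)^m\sum_{j\in\mathbb{Z}_n^+}h_j=(\max_k h_k)^m$, so it only remains to identify $\max_k h_k$. From \eqref{Sec4:Partition} one finds $h_1=1/\alpha$, while for $j\geq2$ a direct computation gives $h_j=\alpha^{-1}\alpha^{(j-2)/(n-1)}\big(\alpha^{1/(n-1)}-1\big)$, which is increasing in $j$ because $\alpha>1$; hence the largest of the geometric steps $h_2,\ldots,h_n$ is $h_n=1-\alpha^{-1/(n-1)}$. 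Comparing this with $h_1=1/\alpha$ gives $\max_k h_k=\max\{1/\alpha,\,1-\alpha^{-1/(n-1)}\}=\eta$, which finishes the error bound.

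The node count is immediate: the basic rule $\mathcal{Q}^{\alpha_j,\beta_j}_m$ interpolates at the $m+1$ Chebyshev points of the first kind on each subinterval, these points lie in the interior of $\hat{I}$ so no node is shared by adjacent subintervals, and there are $n$ subintervals; hence $\mathcal{N}\big(\mathcal{Q}^\alpha_{n,m}[f]\big)=(m+1)n$. I expect the only real obstacle to be the middle step: recognizing that the elementary inequality $\sum_j h_j^{m+1}\leq(\max_k h_k)^m\sum_j h_j$ combined with the telescoping identity $\sum_j h_j=1$ is exactly what converts Lemma~\ref{lem:Sec4subQR} into the advertised $\eta^m$ decay, together with the exponent arithmetic confirming $h_n=1-\alpha^{-1/(n-1)}$ and the monotonicity of the geometric steps. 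This identification of $\max_k h_k$ with $\eta$ is precisely the property for which the graded mesh \eqref{Sec4:Partition} was engineered.
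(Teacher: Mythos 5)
Your proof is correct and follows essentially the same route as the paper: apply Lemma~\ref{lem:Sec4subQR} on each subinterval, bound each $h_j$ by $\eta$, and sum. In fact you make explicit the one step the paper leaves implicit --- writing $h_j^{m+1}\leq \eta^{m}h_j$ and using $\sum_{j\in\mathbb{Z}_n^+}h_j=1$ to get $\sum_{j\in\mathbb{Z}_n^+}h_j^{m+1}\leq\eta^{m}$ (a naive substitution $h_j^{m+1}\leq\eta^{m+1}$ would only give $n\eta^{m+1}$, which is not enough) --- so your write-up is, if anything, a slightly more careful version of the paper's argument.
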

\begin{proof}
The proof of this theorem  is done by applying Lemma \ref{lem:Sec4subQR}  on each of the subintervals. This leads to the estimate that for $j\in\mathbb{Z}_n^+$
\begin{equation}\label{proof4.2}
   \mathcal{E}^{\alpha_j,\beta_j}_{m}[f_j]\leq
  \dfrac{\sqrt{\pi}h_j^{m+1}}{2^{2m+1}(m+1)!\alpha}\norm{f^{(m+1)}}_{\infty}.
\end{equation}
Note that $h_1=x_1-x_0=1/\alpha\leq\eta,$ and for $j\in\mathbb{Z}_n^+$ with $j>1$,
$$
  h_j=x_j-x_{j-1}=x_j\left(1-\alpha^{-1/(n-1)}\right)\leq x_j\eta\leq \eta.
$$
Substituting these bounds into the right-hand side of \eqref{proof4.2} and summing the resulting inequalities over $j\in\mathbb{Z}_n^+$, we obtain the desired estimate for $\mathcal{E}^{\alpha}_{n,m}[f]$.

It remains to estimate the number of the quadrature nodes used in the quadrature formula.
According to the quadrature formula \eqref{alm:Sec4P}, we have that
$$
  \mathcal{N}\left(\mathcal{Q}^{\alpha}_{n,m}[f]\right)=\sum_{j\in\mathbb{Z}_n^+}(m+1)
  = (m+1)n,
$$
proving the desired result. \end{proof}

We remark on the parameter $\eta$ that appears in Theorem \ref{thm:Sec4P}. When $n$ is sufficiently large, for a fixed $\alpha$, the number $\alpha^{-\frac{1}{n-1}}$ is close to 1 and thus, $1-\alpha^{-\frac{1}{n-1}}\leq \frac{1}{\alpha}$. Hence $\eta=\frac{1}{\alpha}$. Therefore, when $n$ is sufficiently large, the accuracy order of the quadrature formula \eqref{alm:Sec4P} is ${\cal O}(1/\alpha^m)$.

We next develop the second method to approximate the integral \eqref{Sec2:Int}.
This method uses the same graded-mesh \eqref{Sec4:Partition} for the interval $I$ but variable numbers of quadrature points in the subintervals. The numbers of the quadrature nodes used in each of the subintervals are chosen so that the resulting quadrature formula has approximately equal errors on all the subintervals.
Specifically, for  $n\in\mathbb{N}$ with $n>1$ and for the partition of $I$ chosen as \eqref{Sec4:Partition}, we let
\begin{equation}\label{Sec4:NumberNode}
m_j:=\left\lceil \frac{n(n-1)}{n+1-j}\right\rceil, ~~\text{for} ~~j\in\mathbb{Z}_{n}^+,
\end{equation}
where $\lceil a\rceil $  denotes the smallest integer not less than $a$.
For each $j\in\mathbb{Z}^{+}_{n}$, we use $\mathcal{Q}^{\alpha_j,\beta_j}_{m_j}[f]$ to approximate $\mathcal{I}_{\alpha_j,\beta_j}[f_j]$.
Integral $\mathcal{I}_\alpha[f]$ defined by \eqref{Sec4:Int}
is then approximated by the quadrature formula
\begin{equation}\label{alm:Sec4E}
\mathcal{Q}^{\alpha}_{n}[f]:=\sum_{j\in\mathbb{Z}_n^+}\mathcal{Q}^{\alpha_j,\beta_j}_{m_j}[f_j].
\end{equation}

We next estimate the error
$$\mathcal{E}^{\alpha}_{n}[f]:=\abs{\mathcal{I}_\alpha[f]-\mathcal{Q}^{\alpha}_{n}[f]}$$
of the quadrature formula $\mathcal{Q}^{\alpha}_{n}[f]$ defined by \eqref{alm:Sec4E}.
To this end, we recall the inequality
\begin{equation}\label{Sec4:Stirling-coro}
n!\geq\sqrt{2\pi n}\left(n/{\rm e}\right)^n, \ \ \mbox{for}\ \ n\in\mathbb{N},
\end{equation}
which is obtained from the Stirling formula \cite{Abramowitz}, and establish the following technical lemma.

\begin{lemma}\label{lem:Sec4Pre}
There exists a positive constant $c$ such that for all $\alpha>1$, for all $n\in\mathbb{N}$ satisfying
\begin{equation}\label{Sec4:C}
(n-1)(\ln{(n+1+{\rm e})}-1)\geq\ln{\alpha},
\end{equation}
and for all  $j\in\mathbb{Z}_n^+$ with $j>1$, the inequality holds
\begin{equation}\label{main-result-of-lemma}
\frac{(\alpha^{1/(n-1)}-1)^{m_j+1}}{(m_j+1)!}\leq c(n+1)^{-1/2}.
\end{equation}
\end{lemma}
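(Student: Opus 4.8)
The plan is to reduce the estimate to two ingredients: a clean upper bound on the base $\alpha^{1/(n-1)}-1$ extracted from hypothesis \eqref{Sec4:C}, and the Stirling lower bound \eqref{Sec4:Stirling-coro} applied to the factorial in the denominator. The whole argument is then a direct combination, and the constant $c$ will turn out to be $1/\sqrt{2\pi}$.

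First I would exploit condition \eqref{Sec4:C}. Since $n>1$ we may divide by $n-1>0$ to obtain $\frac{\ln\alpha}{n-1}\leq\ln(n+1+{\rm e})-1=\ln\frac{n+1+{\rm e}}{\rm e}$. Exponentiating gives $\alpha^{1/(n-1)}\leq\frac{n+1+{\rm e}}{\rm e}$, and therefore $\alpha^{1/(n-1)}-1\leq\frac{n+1}{\rm e}$. This is the sole place where the hypothesis is used, and it is evidently engineered precisely so as to produce the factor $\frac{n+1}{\rm e}$ on the right. Next I would record the lower bound $m_j\geq n$ valid for every $j>1$: from the definition \eqref{Sec4:NumberNode}, the quantity $\frac{n(n-1)}{n+1-j}$ increases in $j$, so its ceiling $m_j$ is non-decreasing, and the smallest value is attained at $j=2$, where $m_2=\lceil n(n-1)/(n-1)\rceil=n$. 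Hence $m_j+1\geq n+1$ throughout.

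With $a:=\alpha^{1/(n-1)}-1$, I would then apply \eqref{Sec4:Stirling-coro} to the integer $m_j+1\in\mathbb{N}$ to get
$$
\frac{a^{m_j+1}}{(m_j+1)!}\leq\frac{1}{\sqrt{2\pi(m_j+1)}}\left(\frac{a\,{\rm e}}{m_j+1}\right)^{m_j+1}.
$$
The first step yields $a\,{\rm e}\leq n+1\leq m_j+1$, so the parenthesised quantity is at most $1$ and its $(m_j+1)$-th power is at most $1$. The remaining prefactor obeys $\frac{1}{\sqrt{2\pi(m_j+1)}}\leq\frac{1}{\sqrt{2\pi(n+1)}}=\frac{1}{\sqrt{2\pi}}(n+1)^{-1/2}$, again because $m_j+1\geq n+1$. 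Combining these bounds establishes \eqref{main-result-of-lemma} with $c=1/\sqrt{2\pi}$.

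There is no deep obstacle in this argument; the one point requiring attention is the recognition that the opaque-looking hypothesis \eqref{Sec4:C} is tailored exactly to force $a\,{\rm e}\leq n+1$, which is in turn dominated by $m_j+1$ precisely because $m_j\geq n$. The remaining care is purely bookkeeping: applying \eqref{Sec4:Stirling-coro} to $m_j+1$ rather than to $m_j$, and invoking the monotonicity of $m_j$ in $j$ only to secure the two inequalities $m_j+1\geq n+1$ used above.
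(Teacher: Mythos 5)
Your argument is correct and is essentially the paper's own proof: both reduce the claim to the two facts that hypothesis \eqref{Sec4:C} is equivalent to ${\rm e}(\alpha^{1/(n-1)}-1)\leq n+1$ and that $m_j+1\geq n+1$ for $j>1$, and then apply the Stirling bound \eqref{Sec4:Stirling-coro} to $m_j+1$. The only cosmetic difference is that you absorb the base into $(a{\rm e}/(m_j+1))^{m_j+1}\leq 1$ directly, whereas the paper first replaces $m_j+1$ by $n+1$ in the exponential factor and then multiplies by $a^{m_j+1}$; your version also yields the explicit constant $c=1/\sqrt{2\pi}$.
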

\begin{proof}
Since $\alpha>1$, condition \eqref{Sec4:C} implies that  $n>1$.
By inequality \eqref{Sec4:Stirling-coro} with $n:=m_j+1$, there exists a positive constant $c$ such that for all $n\in\mathbb{N}$ with $n>1$ and $j\in\mathbb{Z}_n^+$ with $j>1$,
\begin{equation*}
\frac{1}{(m_j+1)!}\leq c (m_j+1)^{-1/2}\left(\frac{\rm e}{m_j+1}\right)^{m_j+1}.
\end{equation*}
By the definition \eqref{Sec4:NumberNode} of $m_j$,
we see that  $m_j+1\geq n+1$ for $j>1$. Using this result in the right-hand side of the above inequality yields
\begin{equation}\label{QQQQ}
\frac{1}{(m_j+1)!}\leq c (n+1)^{-1/2} \left(\frac{\rm e}{n+1}\right)^{m_j+1}.
\end{equation}
Multiplying both sides of inequality \eqref{QQQQ} by $(\alpha^{1/(n-1)}-1)^{m_j+1}$, we obtain that
\begin{equation}\label{QQQQ00}
\frac{(\alpha^{1/(n-1)}-1)^{m_j+1}}{(m_j+1)!}
\leq c(n+1)^{-1/2}\left(\frac{{\rm e}(\alpha^{1/(n-1)}-1)}{n+1}\right)^{m_j+1}.
\end{equation}
Moreover, condition \eqref{Sec4:C} is equivalent to that
$$
\frac{{\rm e}(\alpha^{1/(n-1)}-1)}{n+1}\leq 1.
$$
Using this inequality in the right-hand side of \eqref{QQQQ00}, we find that there exists a positive
constant $c$ such that for all $\alpha>1$,
$n\in\mathbb{N}$ satisfying \eqref{Sec4:C} and  $j\in\mathbb{Z}_n^+$ with $j>1$, the desired estimate
\eqref{main-result-of-lemma} holds.
\end{proof}

We are now ready to establish the estimate for $\mathcal{E}^{\alpha}_{n}[f]$.
For a function $\phi\in C^{\infty}(\Omega)$, we define
$$\norm{\phi}_{n}:=\max\set{\norm{\phi^{(j)}}_\infty: j\in\mathbb{Z}_n}~\text{for} ~ n\in\mathbb{N}.$$

\begin{theorem}\label{thm:Sec4E}
If $f\in C^{\infty}(I)$, then there exists a positive constant $c$ such that for all $\alpha> 1$ and  $n\in\mathbb{N}$ satisfying \eqref{Sec4:C},
\begin{equation*}\label{Sec4:E}
\mathcal{E}^{\alpha}_{n}[f]\leq c (n+1)^{-1/2}(2\alpha)^{-n-1}\norm{f}_{(n-1)n+1}.
\end{equation*}
For $n\in\mathbb{N}$ with $n>1$, there holds the estimate
$$\mathcal{N}\left(\mathcal{Q}^{\alpha}_{n}[f]\right)\leq n(n-1)\ln{n}+n^2+n.$$
\end{theorem}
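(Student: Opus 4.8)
The plan is to bound the global error by the sum of the errors committed on the individual subintervals and then to exploit the precise coupling between the graded mesh \eqref{Sec4:Partition} and the node counts \eqref{Sec4:NumberNode}. Since $\mathcal{Q}^{\alpha}_{n}[f]$ in \eqref{alm:Sec4E} approximates each summand $\mathcal{I}_{\alpha_j,\beta_j}[f_j]$ of the exact decomposition \eqref{Sec4:Int} by $\mathcal{Q}^{\alpha_j,\beta_j}_{m_j}[f_j]$, the triangle inequality gives $\mathcal{E}^{\alpha}_{n}[f]\le\sum_{j\in\mathbb{Z}_n^+}\mathcal{E}^{\alpha_j,\beta_j}_{m_j}[f_j]$. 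To each summand I would apply Lemma \ref{lem:Sec4subQR} with $m:=m_j$, reducing the task to estimating, for every $j$, the quantity $h_j^{m_j+1}/\bigl(2^{2m_j+1}(m_j+1)!\bigr)$ together with the derivative norm $\norm{f^{(m_j+1)}}_\infty$.

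The heart of the argument is the bookkeeping of the powers of $\alpha$. For $j=1$ one has $h_1=1/\alpha$ and $m_1=n-1$, so the first summand is directly $\tfrac{\sqrt{\pi}}{2^{2n-1}n!}\alpha^{-(n+1)}\norm{f^{(n)}}_\infty$, which already carries the target factor $\alpha^{-(n+1)}$. For $j>1$ I would write $h_j=x_{j-1}\bigl(\alpha^{1/(n-1)}-1\bigr)$ with $x_{j-1}=\alpha^{(j-2)/(n-1)-1}$, so that $h_j^{m_j+1}=\alpha^{e_j(m_j+1)}\bigl(\alpha^{1/(n-1)}-1\bigr)^{m_j+1}$ where $e_j=-(n+1-j)/(n-1)$. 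The crucial observation is that the choice \eqref{Sec4:NumberNode} forces $m_j+1\ge n(n-1)/(n+1-j)$, whence $e_j(m_j+1)\le -n$; this is exactly the reason $m_j$ is defined as it is. Consequently $h_j^{m_j+1}\le\alpha^{-n}\bigl(\alpha^{1/(n-1)}-1\bigr)^{m_j+1}$, and dividing by $(m_j+1)!$ and invoking Lemma \ref{lem:Sec4Pre} under hypothesis \eqref{Sec4:C} yields $h_j^{m_j+1}/(m_j+1)!\le c\,\alpha^{-n}(n+1)^{-1/2}$. I expect this uniform extraction of the factor $\alpha^{-n}$ across all subintervals to be the main obstacle, since it is where the exponent of the mesh and the number of nodes must be balanced precisely.

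With these two estimates in hand, I would bound every derivative norm using the monotonicity $m_j+1\le m_n+1=(n-1)n+1$, so that $\norm{f^{(m_j+1)}}_\infty\le\norm{f}_{(n-1)n+1}$, and then collect the factor $\alpha^{-1}$ from Lemma \ref{lem:Sec4subQR} with the $\alpha^{-n}$ just obtained to produce $\alpha^{-(n+1)}$ in each term. It remains to sum the constants $2^{-(2m_j+1)}$; since $m_j$ is nondecreasing in $j$ with $m_j\ge n$ for $j\ge 2$, one has $\sum_{j}2^{-(2m_j+1)}\le n\,2^{-(2n+1)}\le 2^{-(n+1)}$ up to an absolute constant, which supplies the remaining factor $2^{-(n+1)}$ needed to form $(2\alpha)^{-(n+1)}$. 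Combining the $j=1$ term (for which $2^{-(2n-1)}\le 2^{-(n+1)}$ and $1/n!\le c(n+1)^{-1/2}$ when $n\ge 2$) with the sum over $j>1$ and absorbing all purely numerical factors into a single constant $c$ gives the claimed bound $\mathcal{E}^{\alpha}_{n}[f]\le c(n+1)^{-1/2}(2\alpha)^{-n-1}\norm{f}_{(n-1)n+1}$.

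Finally, the node count follows from $\mathcal{N}\bigl(\mathcal{Q}^{\alpha}_{n}[f]\bigr)=\sum_{j\in\mathbb{Z}_n^+}(m_j+1)$, since the basic formula on the $j$-th subinterval uses $m_j+1$ Chebyshev points. Using $\lceil a\rceil\le a+1$ in \eqref{Sec4:NumberNode} gives $\sum_j m_j\le n(n-1)\sum_{j=1}^{n}\tfrac{1}{n+1-j}+n=n(n-1)H_n+n$, where $H_n$ denotes the $n$-th harmonic number. The elementary bound $H_n\le\ln n+1$ then produces $\mathcal{N}\le n(n-1)(\ln n+1)+2n=n(n-1)\ln n+n^2+n$, as asserted.
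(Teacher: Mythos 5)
Your proposal is correct and follows essentially the same route as the paper's proof: the same subinterval-by-subinterval application of Lemma \ref{lem:Sec4subQR}, the same exponent bookkeeping showing $\frac{j-n-1}{n-1}(m_j+1)\leq -n$, the invocation of Lemma \ref{lem:Sec4Pre} for the factorial factor, and the same harmonic-number bound for the node count. No substantive differences to report.
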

\begin{proof}
We establish the error bound of this theorem by estimating the errors
\begin{equation*}
\mathcal{E}^{\alpha_j,\beta_j}_{m_j}[f_j]:=\abs{\mathcal{I}_{\alpha_j,\beta_j}[f_j]-\mathcal{Q}^{\alpha_j,\beta_j}_{m_j}[f_j]}
\end{equation*}
for $j\in\mathbb{Z}_{n}^+$, and then summing them over $j$.
Applying Lemma \ref{lem:Sec4subQR} with $m:=m_1$,  we have that for $j=1$
\begin{equation*}
\mathcal{E}^{\alpha_1,\beta_1}_{m_1}[f_1]\leq\dfrac{\sqrt{\pi} \alpha^{-n-1}}{n!2^{2n-1}}\norm{f^{(n)}}_{\infty}.
\end{equation*}
For $j>1$, by applying  Lemma \ref{lem:Sec4subQR} with 
$$
m:=m_j\ \ \mbox{and}\ \ h_j:=\alpha^{\frac{j-n-1}{n-1}}(\alpha^{1/(n-1)}-1),
$$
we obtain that
\begin{eqnarray}\label{Sec4:proofThmE}
\mathcal{E}^{\alpha_j,\beta_j}_{m_j}[f_j]\leq\dfrac{\sqrt{\pi}}{2^{2m_j+1}}
\dfrac{(\alpha^{1/(n-1)}-1)^{m_j+1}}{(m_j+1)!}\alpha^{\frac{j-n-1}{n-1}(m_j+1)-1}
\norm{f^{(m_j+1)}}_{\infty}.
\end{eqnarray}
Applying the estimate in Lemma \ref{lem:Sec4Pre} to the right hand side of \eqref{Sec4:proofThmE} yields that there exists
a positive constant $c$ such that for all $\alpha>1$, for $n\in\mathbb{N}$ satisfying \eqref{Sec4:C}, and for $j\in\mathbb{Z}_{n}^+$ with $j>1$,
\begin{equation}\label{Sec4:proofThmEEE}
\mathcal{E}^{\alpha_j,\beta_j}_{m_j}[f_j]\leq c\dfrac{(n+1)^{-1/2}}{2^{2m_j+1}}\alpha^{\frac{j-n-1}{n-1}(m_j+1)-1}\norm{f^{(m_j+1)}}_{\infty}.
\end{equation}
Note that for $j\in\mathbb{Z}_{n}^+$ with $j>1$,
$$
\dfrac{j-n-1}{n-1}(m_j+1)-1\leq \dfrac{j-n-1}{n-1}\dfrac{n(n-1)}{n+1-j}-1=-n-1.
$$
Substituting this result into the right hand side of inequality \eqref{Sec4:proofThmEEE}, we obtain the estimate
\begin{equation*}
\mathcal{E}^{\alpha_j,\beta_j}_{m_j}[f_j]\leq c\dfrac{(n+1)^{-1/2}}{2^{2m_j+1}}\alpha^{-n-1}\norm{f^{(m_j+1)}}_{\infty}.
\end{equation*}
Summing up the both sides of the above inequalities over $j\in\mathbb{Z}_{n}^+$,  we observe that
there exists a positive constant $c$ such that for all $\alpha> 1$ and $n\in\mathbb{N}$ satisfying \eqref{Sec4:C}
\begin{eqnarray*}
\mathcal{E}^{\alpha}_{n}[f]
&\leq&\dfrac{\sqrt{\pi}}{n!2^{2n-1}}\alpha^{-n-1}\norm{f^{(n)}}_{\infty}+c\alpha^{-n-1}
       \set{\sum_{j=2}^{n}\dfrac{(n+1)^{-{1}/{2}}}{2^{2m_j+1}}}\norm{f}_{(n-1)n+1}\\
&\leq&\dfrac{\sqrt{\pi}}{n!2^{2n-1}}\alpha^{-n-1}\norm{f^{(n)}}_{\infty}+\dfrac{cn}{2^{2n-1}}
\alpha^{-n-1}(n+1)^{-{1}/{2}}\norm{f}_{(n-1)n+1}\\
&\leq&
\dfrac{4cn}{2^{n}}
(2\alpha)^{-n-1}(n+1)^{-{1}/{2}}\norm{f}_{(n-1)n+1} .
\end{eqnarray*}
Note that $n/2^{n}<1$ for all $n\in\mathbb{N}$. Using this in the last step of the above inequality
leads to the first desired estimate.

It remains to estimate the number $\mathcal{N}\left(\mathcal{Q}^{\alpha}_{n}[f]\right)$ of the quadrature nodes used in the quadrature formula \eqref{alm:Sec4E}. To this end, we note that
\begin{eqnarray*}
\mathcal{N}\left(\mathcal{Q}^{\alpha}_{n}[f]\right)
\leq\sum_{j\in\mathbb{Z}_n^+}\left\{
      \left(\frac{n(n-1)}{n+1-j}+1\right)+1\right\}
\leq 2n+n(n-1)
      \sum_{j\in\mathbb{Z}_n^+}\frac{1}{n+1-j}.
\end{eqnarray*}
For $n\in\mathbb{N}$, by using the following inequality in the above estimate
$$
\sum_{j\in\mathbb{Z}_n^+}\frac{1}{j}\leq \ln{n}+1,
$$
we have that
\begin{eqnarray*}
\mathcal{N}\left(\mathcal{Q}^{\alpha}_{n}[f]\right)  \leq2n+n(n-1)(\ln{n}+1)
  = n(n-1)\ln{n}+n^2+n,
\end{eqnarray*}
which completes the proof.
\end{proof}

From Theorems \ref{thm:Sec4P} and  \ref{thm:Sec4E}, we see that the quadrature formula \eqref{alm:Sec4P} has accuracy of a polynomial order (in terms of $\alpha$) and the quadrature formula \eqref{alm:Sec4E} achieves accuracy of an exponential order (in terms of $\alpha$). The number of the quadrature nodes used in the quadrature formula \eqref{alm:Sec4P} is linear in $n$ and the number of the quadrature nodes used in the quadrature formula \eqref{alm:Sec4E} is quadratic in $n$. Moreover, these numbers for both quadrature formulas are independent of $\alpha$. Specifically, the number of quadrature nodes used in $\mathcal{Q}^{\alpha}_{n,m}[f]$ is $(m+1)n$ for $m$ and that in $\mathcal{Q}^{\alpha}_{n}[f]$ is $n+\sum_{j\in\mathbb{Z}_n^+}m_j$,
where $m_j$ for $j\in\mathbb{Z}_n^+$  are defined by \eqref{Sec4:NumberNode}.

\section{Computation of the Moments}

In this section, we propose a method to compute the exact values of the moments $w_{\alpha,\beta,k}$, for $k\in\mathbb{Z}_{m}$, defined as in \eqref{Sec3:weight}. For this purpose, we first reexpress the moments in terms of integrals involved $G_\alpha$. We then provide a method to calculate these integrals exactly.

We begin with considering the representation  of the moments  $w_{\alpha,\beta,k}$. To this end, we let $k\in\mathbb{N}_0$ and for $j\in\mathbb{Z}_k$, $\alpha>0$ and $b>0$,  we define
\begin{equation} \label{Sec5:Momtent}
   \mathcal{M}_j[\alpha,b]:=\int_0^bx^j{\rm e}^{-\alpha^2x^2}{\rm d}x,
\end{equation}
and recall the binomial coefficients
\begin{equation*}
C_k^j:=\dfrac{k!}{j!(k-j)!}.
\end{equation*}

\begin{proposition}\label{Sec5:thm}
Let $k\in\mathbb{N}_0$.  If  $\alpha>0$ and $\beta\leq 0$, then
  \begin{equation*}
    w_{\alpha,\beta,k}=\sum_{j\in\mathbb{Z}_k}C_k^j\beta^{k-j}\left(\mathcal{M}_j[\alpha,1-\beta]+(-1)^{2k-j}\mathcal{M}_j[\alpha,1+\beta]\right)~\text{for}~\beta>-1,
  \end{equation*}
    \begin{equation*}
    w_{\alpha,\beta,k}=\sum_{j\in\mathbb{Z}_k}C_k^j\beta^{k-j}
    \left(\mathcal{M}_j[\alpha,1-\beta]-\mathcal{M}_j[\alpha,\abs{1+\beta}]\right)~\text{for}~\beta<-1,
  \end{equation*}
    \begin{equation*}
    w_{\alpha, -1, k}=\sum_{j\in\mathbb{Z}_k}C_k^j(-1)^{k-j}\mathcal{M}_j[\alpha,2].
  \end{equation*}
\end{proposition}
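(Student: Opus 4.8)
The plan is to reduce the moment $w_{\alpha,\beta,k}=\int_{-1}^{1}x^k{\rm e}^{-\alpha^2(x-\beta)^2}{\rm d}x$ to the basic integrals $\mathcal{M}_j[\alpha,b]$ by first recentering the Gaussian at the origin and then expanding the polynomial factor. First I would make the substitution $t:=x-\beta$, which recasts the integral as $\int_{-1-\beta}^{1-\beta}(t+\beta)^k{\rm e}^{-\alpha^2t^2}{\rm d}t$. Applying the binomial theorem $(t+\beta)^k=\sum_{j\in\mathbb{Z}_k}C_k^j\beta^{k-j}t^j$ and interchanging the (finite) sum with the integral then gives
$$w_{\alpha,\beta,k}=\sum_{j\in\mathbb{Z}_k}C_k^j\beta^{k-j}\int_{-1-\beta}^{1-\beta}t^j{\rm e}^{-\alpha^2t^2}{\rm d}t,$$
so the entire task reduces to evaluating the inner integral $\int_{-1-\beta}^{1-\beta}t^j{\rm e}^{-\alpha^2t^2}{\rm d}t$ and matching it to the $\mathcal{M}_j$ notation.

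The key step is a case analysis on the sign of the lower limit $-1-\beta=-(1+\beta)$, which is governed by the position of $\beta$ relative to $-1$; note that the upper limit $1-\beta$ is always positive under the hypothesis $\beta\leq 0$, so $\mathcal{M}_j[\alpha,1-\beta]$ is always well defined. When $\beta>-1$ the interval straddles the origin, and I would split $\int_{-1-\beta}^{1-\beta}=\int_{-1-\beta}^{0}+\int_{0}^{1-\beta}$: the second piece is exactly $\mathcal{M}_j[\alpha,1-\beta]$, while the substitution $s:=-t$ in the first piece turns it into $(-1)^j\mathcal{M}_j[\alpha,1+\beta]$. Rewriting $(-1)^j=(-1)^{2k-j}$ yields the first displayed formula. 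When $\beta<-1$ both limits are positive with $-1-\beta=\abs{1+\beta}$, so writing $\int_{\abs{1+\beta}}^{1-\beta}=\int_{0}^{1-\beta}-\int_{0}^{\abs{1+\beta}}$ produces $\mathcal{M}_j[\alpha,1-\beta]-\mathcal{M}_j[\alpha,\abs{1+\beta}]$, which is the second formula. Finally, when $\beta=-1$ the lower limit collapses to $0$, the inner integral is simply $\mathcal{M}_j[\alpha,2]$, and since $\beta^{k-j}=(-1)^{k-j}$ this gives the third formula.

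The computation is elementary once the translation and binomial expansion are in place; the only genuine obstacle is purely one of bookkeeping, namely keeping the signs and integration limits consistent across the three cases. In particular, the reflection $s=-t$ on the negative portion of the range is the source of the factor $(-1)^j$, and I would check carefully that this matches the exponent $2k-j$ appearing in the statement, which it does because $(-1)^{2k}=1$. I would likewise confirm throughout that $\beta\leq 0$ forces $1-\beta>0$ and, in the first case, that $\beta>-1$ forces $1+\beta>0$, so that every argument of $\mathcal{M}_j$ is a legitimate positive upper limit.
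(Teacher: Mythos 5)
Your proof is correct and follows essentially the same route as the paper: the translation $t=x-\beta$, the case analysis on the sign of $1+\beta$, the reflection $s=-t$ producing the factor $(-1)^j=(-1)^{2k-j}$, and the binomial expansion are all exactly the paper's steps (the paper merely splits the integral before expanding rather than after, which is immaterial). No gaps.
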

\begin{proof}
We first derive forms of $w_{\alpha,\beta,k}$ for different values of $\beta$. From  \eqref{Sec3:weight}, we obtain that
\begin{equation} \label{W_alpha}
 w_{\alpha,\beta,k}=\int_{-1}^1x^ke^{-\alpha^2(x-\beta)^2}{\rm d}x.
\end{equation}
By a change of variables $x-\beta \mapsto x$, we have that
\begin{eqnarray}\label{Sec5:Momtent011}
w_{\alpha,\beta,k}&=&\int_0^{1-\beta}(x+\beta)^k{\rm e}^{-\alpha^2x^2}{\rm d}x
+\int^0_{-1-\beta}(x+\beta)^k{\rm e}^{-\alpha^2x^2}{\rm d}x.
\end{eqnarray}
For $-1<\beta\leq0$, we make a change of variables $x\mapsto -x$ in the second term of the right hand side of equation \eqref{Sec5:Momtent011}, which yields that
\begin{eqnarray}\label{Sec5:Momtent01}
w_{\alpha,\beta,k}&=&\int_0^{1-\beta}(x+\beta)^k{\rm e}^{-\alpha^2x^2}{\rm d}x+
(-1)^k\int_0^{1+\beta}(x-\beta)^k{\rm e}^{-\alpha^2x^2}{\rm d}x.
\end{eqnarray}
For $\beta<-1$, from formula \eqref{Sec5:Momtent011} we have that
\begin{eqnarray}\label{Sec5:Momtent02}
w_{\alpha,\beta,k}
&=& \int_0^{1-\beta}(x+\beta)^k{\rm e}^{-\alpha^2x^2}{\rm d}x-
\int_0^{\abs{1+\beta}}(x+\beta)^k{\rm e}^{-\alpha^2x^2}{\rm d}x.
\end{eqnarray}
For $\beta=-1$, formula \eqref{Sec5:Momtent011} with a substitution $\beta=-1$ gives that
\begin{equation}\label{Sec5:Momtent2}
w_{\alpha,\beta,k}=\int_{0}^{2}(x-1)^k{\rm e}^{-\alpha^2x^2}{\rm d}x.
\end{equation}
We shall apply the binomial formula to $(x+\beta)^k$, $(x-\beta)^k$ in \eqref{Sec5:Momtent01}, $(x+\beta)^k$ in \eqref{Sec5:Momtent02}, and $(x-1)^k$ in \eqref{Sec5:Momtent2} to verify the assertions of this proposition.

We consider the case $-1<\beta\leq0$ only since the other two cases can be similarly handled.
Applying  the binomial formula to the equation  \eqref{Sec5:Momtent01}, we obtain  that
\begin{eqnarray*}
  w_{\alpha,\beta,k}=\sum_{j\in\mathbb{Z}_k}C_k^j\beta^{k-j}\int_0^{1-\beta}x^j {\rm e}^{-\alpha^2x^2}{\rm d}x+(-1)^{k}\sum_{j\in\mathbb{Z}_k}C_k^j(-\beta)^{k-j}\int_0^{1+\beta}x^j {\rm e}^{-\alpha^2x^2}{\rm d}x.
\end{eqnarray*}
 This together with the definition \eqref{Sec5:Momtent} yields the first result.
\end{proof}

We next describe a  method to compute $\mathcal{M}_k[\alpha,b]$  defined as in \eqref{Sec5:Momtent} for  $k\in\mathbb{N}_0$,
$\alpha>0$ and $b>0$.
We first show an auxiliary equality. To this end,
we denote by $\mathcal{D}_\alpha$ the differentiation operator
with respect to $\alpha$.

\begin{lemma}\label{lem:Sec5}
If $j\in\mathbb{N}_0$, then
\begin{equation*}
\mathcal{D}_\alpha M_j[\sqrt{\alpha}, b]=- M_{j+2}[\sqrt{\alpha}, b].
\end{equation*}
\end{lemma}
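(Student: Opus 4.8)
The plan is to reduce the statement to an elementary instance of differentiation under the integral sign. First I would substitute $\alpha\mapsto\sqrt{\alpha}$ into the definition \eqref{Sec5:Momtent} of the moment. Since $(\sqrt{\alpha})^2=\alpha$, the Gaussian exponent simplifies and the parameter enters linearly:
$$
\mathcal{M}_j[\sqrt{\alpha},b]=\int_0^b x^j{\rm e}^{-\alpha x^2}{\rm d}x.
$$
This is precisely the feature that produces the clean recurrence, because the $\alpha$-dependence now sits only in the exponent ${\rm e}^{-\alpha x^2}$ as a linear coefficient of $x^2$.

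Next I would differentiate both sides with respect to $\alpha$ and move the operator $\mathcal{D}_\alpha$ inside the integral. Using $\tfrac{\partial}{\partial\alpha}{\rm e}^{-\alpha x^2}=-x^2{\rm e}^{-\alpha x^2}$, the integrand acquires a factor $-x^2$, so that
$$
\mathcal{D}_\alpha\mathcal{M}_j[\sqrt{\alpha},b]=-\int_0^b x^{j+2}{\rm e}^{-\alpha x^2}{\rm d}x.
$$
I would then recognize the integral on the right, again via the substitution $\alpha\mapsto\sqrt{\alpha}$ in \eqref{Sec5:Momtent}, as exactly $\mathcal{M}_{j+2}[\sqrt{\alpha},b]$, which immediately gives the asserted identity $\mathcal{D}_\alpha\mathcal{M}_j[\sqrt{\alpha},b]=-\mathcal{M}_{j+2}[\sqrt{\alpha},b]$.

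The only point requiring care, which I would flag as the main (though mild) obstacle, is the justification for interchanging $\mathcal{D}_\alpha$ with the integral. Here this is routine: the integration runs over the bounded interval $[0,b]$, and both the integrand $(x,\alpha)\mapsto x^j{\rm e}^{-\alpha x^2}$ and its partial derivative $-x^2{\rm e}^{-\alpha x^2}$ are jointly continuous, hence uniformly bounded on $[0,b]\times[\alpha_0,\alpha_1]$ for any compact $[\alpha_0,\alpha_1]\subset(0,\infty)$. The standard Leibniz rule for differentiation under the integral sign therefore applies, and since the limits $0$ and $b$ are independent of $\alpha$, no boundary contributions arise. This completes the argument.
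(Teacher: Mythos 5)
Your proposal is correct and follows essentially the same route as the paper's proof: substitute $\alpha\mapsto\sqrt{\alpha}$ in \eqref{Sec5:Momtent}, differentiate under the integral sign to pick up the factor $-x^2$, and identify the result as $-\mathcal{M}_{j+2}[\sqrt{\alpha},b]$. Your explicit justification of the interchange of $\mathcal{D}_\alpha$ with the integral is a welcome (if routine) addition that the paper leaves implicit.
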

\begin{proof}
Note that
\begin{equation} \label{Sec5:MomtentS}
\mathcal{M}_j[\sqrt{\alpha},b]=\int_0^bx^j{\rm e}^{-\alpha x^2}{\rm d}x,
\end{equation}
for $j\in\mathbb{N}_0$.
Differentiating $\mathcal{M}_j[\sqrt{\alpha},b]$ with respect to  $\alpha$, we obtain that
\begin{equation*}
 \mathcal{D}_\alpha M_j[\sqrt{\alpha}, b]=-\int_0^bx^{j+2}{\rm e}^{-\alpha x^2}{\rm d}x.
\end{equation*}
This together with  \eqref{Sec5:MomtentS} by replacing $j $ with $j+2$ yields the desired formula.
\end{proof}

According to Lemma \ref{lem:Sec5}, the computation of $\mathcal{M}_k[\alpha,b]$ may be done by consider two separate cases when $k$ is an even number and when $k$ is an odd number. We shall use the Leibniz formula for the $k$-th derivative of the product of two functions for $k\in\mathbb{N}$,
\begin{equation}\label{Sec5:Leibniz}
\left(\phi\psi\right)^{(k)}=\sum_{j\in\mathbb{Z}_k} C_k^j\phi^{(j)}\psi^{(k-j)}.
\end{equation}

We first consider the case when $k$ is an odd number.

\begin{proposition}\label{lem:Sec5oddM}
If $k=2j+1$ for $j\in\mathbb{N}_0$, then
\begin{equation*}
  \mathcal{M}_{k}[\alpha,b]=\dfrac{j!}{2}\left(1-{\rm e}^{-b^2\alpha^2 }\right)\alpha^{-2-2j}-
  \dfrac{j!}{2}{\rm e}^{- b^2\alpha^2}\sum_{l\in\mathbb{Z}_j^+}{b^{2l}\alpha^{-2-2j+2l}}/{l!}.
\end{equation*}
\end{proposition}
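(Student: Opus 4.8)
The plan is to reduce the odd-index moment to repeated $\alpha$-differentiation of the elementary base case $\mathcal{M}_1$, and then to evaluate the resulting derivative by the Leibniz rule. First I would record the base case: writing $\mathcal{M}_1[\sqrt{\alpha},b]=\int_0^b x\,{\rm e}^{-\alpha x^2}{\rm d}x$ and integrating directly gives $\mathcal{M}_1[\sqrt{\alpha},b]=\frac{1}{2\alpha}\left(1-{\rm e}^{-\alpha b^2}\right)$. Iterating Lemma \ref{lem:Sec5} $j$ times then yields $\mathcal{M}_{2j+1}[\sqrt{\alpha},b]=(-1)^j\mathcal{D}_\alpha^j\mathcal{M}_1[\sqrt{\alpha},b]$, so the whole computation is reduced to finding the $j$-th derivative in $\alpha$ of $\frac{1}{2}\alpha^{-1}-\frac{1}{2}\alpha^{-1}{\rm e}^{-\alpha b^2}$.

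The next step is to compute this derivative term by term. The derivative of the first summand is immediate: $\mathcal{D}_\alpha^j(\alpha^{-1})=(-1)^j j!\,\alpha^{-1-j}$. For the second summand I would apply the Leibniz formula \eqref{Sec5:Leibniz} with $\phi=\alpha^{-1}$ and $\psi={\rm e}^{-\alpha b^2}$, using $\mathcal{D}_\alpha^l(\alpha^{-1})=(-1)^l l!\,\alpha^{-1-l}$ and $\mathcal{D}_\alpha^{j-l}({\rm e}^{-\alpha b^2})=(-b^2)^{j-l}{\rm e}^{-\alpha b^2}$. After cancelling the factorials in the binomial coefficient $C_j^l$ against $l!$ and reindexing the sum by $l\mapsto j-l$, the derivative of $\alpha^{-1}{\rm e}^{-\alpha b^2}$ collapses to $(-1)^j j!\,{\rm e}^{-\alpha b^2}\sum_{l=0}^{j}\alpha^{-1-j+l}b^{2l}/l!$.

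Combining the two pieces and multiplying by $(-1)^j$ removes all the sign factors, giving $\mathcal{M}_{2j+1}[\sqrt{\alpha},b]=\frac{j!}{2}\alpha^{-1-j}-\frac{j!}{2}{\rm e}^{-\alpha b^2}\sum_{l=0}^{j}\alpha^{-1-j+l}b^{2l}/l!$. I would then pass from $\sqrt{\alpha}$ back to $\alpha$ by the substitution $\alpha\mapsto\alpha^2$ (that is, $\mathcal{M}_{2j+1}[\alpha,b]$ is the preceding expression with $\alpha$ replaced by $\alpha^2$), turning every $\alpha^{-1-j+l}$ into $\alpha^{-2-2j+2l}$ and the exponent into ${\rm e}^{-b^2\alpha^2}$. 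Finally, splitting off the $l=0$ term of the sum, which equals $\alpha^{-2-2j}$, and pairing it with the first summand produces the factor $\frac{j!}{2}(1-{\rm e}^{-b^2\alpha^2})\alpha^{-2-2j}$, leaving the remaining sum over $l\in\mathbb{Z}_j^+$ exactly as claimed.

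I expect the main obstacle to be the bookkeeping in the Leibniz step: tracking the signs $(-1)^l$ and $(-1)^{j-l}$, verifying that $C_j^l$ reduces to $j!/(j-l)!$ after cancellation with $l!$, and correctly reindexing so that the exponents of $\alpha$ and $b$ line up with the stated formula. Everything else is a direct integration or a routine substitution.
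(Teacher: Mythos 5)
Your proposal is correct and follows essentially the same route as the paper: both reduce $\mathcal{M}_{2j+1}$ to $(-1)^j\mathcal{D}_\alpha^j\mathcal{M}_1[\sqrt{\alpha},b]$ via Lemma \ref{lem:Sec5}, evaluate $\mathcal{M}_1$ in closed form, and compute the $j$-th derivative by the Leibniz formula before substituting $\alpha\mapsto\alpha^2$. The only (immaterial) difference is bookkeeping: the paper applies Leibniz to the product $\left(1-{\rm e}^{-b^2\alpha}\right)\cdot\alpha^{-1}$ directly, whereas you split off $\alpha^{-1}$ first and apply Leibniz only to $\alpha^{-1}{\rm e}^{-\alpha b^2}$, recovering the $l=0$ term at the end.
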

\begin{proof}
We first derive the form of $\mathcal{M}_{k}[\sqrt{\alpha},b]$ for $k=2j+1$ with $j\in\mathbb{N}_0$.
Repeatedly using Lemma \ref{lem:Sec5}, we obtain that
$$
\mathcal{M}_{k}[\sqrt{\alpha},b]=(-1)^j\mathcal{D}_\alpha^j \mathcal{M}_{1}[\sqrt{\alpha},b].
$$
We shall make use of the above formula by differentiating $\mathcal{M}_1[\sqrt{\alpha},b]$ with respect to $\alpha$. By a direct computation, we have that
\begin{eqnarray*}
   \mathcal{M}_{1}[\sqrt{\alpha},b]= \left(1-{\rm e}^{-b^2\alpha }\right)/(2\alpha).
\end{eqnarray*}
Hence, we find that
\begin{equation}\label{Sec5:proofLemOdd}
\mathcal{M}_{k}[\sqrt{\alpha},b]=\frac{(-1)^{j}}{2}\mathcal{D}_\alpha^{j}\left[\left(1-{\rm e}^{-b^2\alpha }\right)/\alpha\right].
\end{equation}
Note that  for $l\in\mathbb{Z}_j^+$
$\mathcal{D}_\alpha^{l}(1-{\rm e}^{-b^2\alpha })=(-1)^{l-1}b^{2l}{\rm e}^{-b^2\alpha }$ and
$\mathcal{D}_\alpha^{l}(1/\alpha)=(-1)^ll!\alpha^{-l-1}.$
Applying the Leibniz formula \eqref{Sec5:Leibniz} with these two derivative formulas, we obtain that
\begin{equation*}
\mathcal{D}_\alpha^{j}\left[\left(1-{\rm e}^{-b^2\alpha }\right)/{\alpha}\right]
=(-1)^jj!\left(1-{\rm e}^{-b^2\alpha }\right)\alpha^{-1-j}+(-1)^{j-1}{\rm e}^{- b^2\alpha}\sum_{l\in\mathbb{Z}_j^+}C_j^l(j-l)!b^{2l}\alpha^{-1-j+l}.
\end{equation*}
Substituting this equality into \eqref{Sec5:proofLemOdd} yields that
\begin{equation*}
     \mathcal{M}_{k}[\sqrt{\alpha},b]=\dfrac{j!}{2}\left(1-{\rm e}^{-b^2\alpha }\right)\alpha^{-1-j}-
     \dfrac{j!}{2}{\rm e}^{- b^2\alpha}\sum_{l\in\mathbb{Z}_j^+}\dfrac{b^{2l}\alpha^{-1-j+l}}{l!}.
\end{equation*}
In the above equation, we replace $\sqrt{\alpha}$ by $\alpha$ and obtain
the desired result of this proposition.
\end{proof}

We next consider the case when $k$ is an even number.
We shall use the notation of the error function \cite{Abramowitz} defined by
\begin{equation}\label{Sec5:erf}
  {\rm erf}(x):=\dfrac{2}{\sqrt{\pi}}\int_0^x{\rm e}^{-s^2}{\rm d}s,~\text{for}~x\in\mathbb{R}^+.
\end{equation}
For more information about the error function, see for instance \cite{Lebedev, Temme}. We shall use $\Gamma$ to denote the Gamma function and recall that for a constant $c\in(-1,1)$ and $n\in\mathbb{N}$,
\begin{equation*}
\frac{\Gamma(c+n)}{\Gamma(c)}=c(c+1)\cdots(c+n-1).
\end{equation*}
In particular, for $c=1/2$, we have that
$
\Gamma(1/2)=\sqrt{\pi}.
$

\begin{proposition}\label{lem:Sec5evenM}
If $k=2j$ for $j\in\mathbb{N}_0$, then
\begin{eqnarray*}
\mathcal{M}_{k}[\alpha,b]=\dfrac{\Gamma(\frac{1}{2}+j)}{2}{\rm erf}(b\alpha)\alpha^{-1-2j}
-\frac{1}{2\pi}
\sum_{l\in\mathbb{Z}_j^+}\sum_{i\in\mathbb{Z}_{l-1}}
\dfrac{j!\Gamma(l-i-\frac{1}{2})\Gamma(\frac{1}{2}+j-l)b^{2i+1}}{(j-l)!(l-i-1)!i!l}{\rm e}^{-b^2\alpha^2}\alpha^{-2j+2i}.
\end{eqnarray*}
\end{proposition}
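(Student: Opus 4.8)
The plan is to mirror the structure of Proposition \ref{lem:Sec5oddM}, but starting from the base case $\mathcal{M}_0$ instead of $\mathcal{M}_1$, and to carry the even-index moment $\mathcal{M}_{2j}$ down to $\mathcal{M}_0$ via repeated application of Lemma \ref{lem:Sec5}. First I would record the base case: by the definition \eqref{Sec5:MomtentS} and the error function \eqref{Sec5:erf},
\begin{equation*}
\mathcal{M}_0[\sqrt{\alpha},b]=\int_0^b {\rm e}^{-\alpha x^2}{\rm d}x=\dfrac{\sqrt{\pi}}{2\sqrt{\alpha}}{\rm erf}(b\sqrt{\alpha}),
\end{equation*}
obtained by the substitution $s:=\sqrt{\alpha}\,x$. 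Then, as in the odd case, Lemma \ref{lem:Sec5} applied $j$ times gives
\begin{equation}\label{plan:even-reduction}
\mathcal{M}_{2j}[\sqrt{\alpha},b]=(-1)^j\mathcal{D}_\alpha^j\,\mathcal{M}_0[\sqrt{\alpha},b]
=\frac{(-1)^j\sqrt{\pi}}{2}\,\mathcal{D}_\alpha^j\!\left[\alpha^{-1/2}{\rm erf}(b\sqrt{\alpha})\right].
\end{equation}
So the task reduces to computing the $j$-th derivative in $\alpha$ of the product $\phi\psi$ with $\phi(\alpha):=\alpha^{-1/2}$ and $\psi(\alpha):={\rm erf}(b\sqrt{\alpha})$, and this is exactly where the Leibniz formula \eqref{Sec5:Leibniz} enters.

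The two ingredients needed for the Leibniz expansion are the derivatives of the two factors. The power factor is routine: $\mathcal{D}_\alpha^{\,l}(\alpha^{-1/2})=(-1)^l\tfrac{\Gamma(1/2+l)}{\Gamma(1/2)}\alpha^{-1/2-l}$, which matches the Gamma-ratio notation already set up in the excerpt. The derivatives of the error-function factor are the delicate part. One differentiation gives $\mathcal{D}_\alpha\,{\rm erf}(b\sqrt{\alpha})=\tfrac{b}{\sqrt{\pi}}\alpha^{-1/2}{\rm e}^{-b^2\alpha}$, after which each further derivative hits a product of a negative power of $\alpha$ and the exponential ${\rm e}^{-b^2\alpha}$; iterating produces a second Leibniz-type sum, which is the source of the inner index $i$ and of the factor ${\rm e}^{-b^2\alpha}$ and the $\Gamma(l-i-\tfrac12)$ in the claimed formula. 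I would establish, by a short induction on the number of remaining differentiations, a closed form for $\mathcal{D}_\alpha^{\,l-1}\!\left[\alpha^{-1/2}{\rm e}^{-b^2\alpha}\right]$ as a finite sum over $i\in\mathbb{Z}_{l-1}$ of terms $b^{\cdots}\alpha^{\cdots}{\rm e}^{-b^2\alpha}$ with Gamma-coefficients, and then feed that into the outer Leibniz sum.

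Assembling the two pieces, the $l=0$ term of the outer Leibniz sum (where all $j$ derivatives land on $\phi=\alpha^{-1/2}$ and none on $\psi$) produces the leading ${\rm erf}(b\alpha)$ term, while the terms $l\in\mathbb{Z}_j^+$ (at least one derivative on $\psi$) produce the exponential double sum. I would collect constants using $\tfrac{\Gamma(1/2+j)}{2}$ for the leading term and the product of the binomial coefficient $C_j^l$, the factorial from $\mathcal{D}_\alpha^{\,j-l}(\alpha^{-1/2})$, and the Gamma factors from the error-function derivatives for the double sum, matching them against the stated coefficient $\tfrac{j!\,\Gamma(l-i-1/2)\,\Gamma(1/2+j-l)}{(j-l)!\,(l-i-1)!\,i!\,l}$. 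Finally I would replace $\sqrt{\alpha}$ by $\alpha$ throughout \eqref{plan:even-reduction}, exactly as in the closing step of Proposition \ref{lem:Sec5oddM}, to obtain the formula in terms of $\alpha$ and $b^2\alpha^2$.

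The main obstacle I anticipate is bookkeeping rather than conceptual: correctly tracking the nested Leibniz expansions so that the signs, the Gamma-function coefficients, and the powers of $\alpha$ and $b$ all line up with the stated closed form. In particular, verifying that the iterated derivatives of $\alpha^{-1/2}{\rm e}^{-b^2\alpha}$ reorganize into precisely the single index $i$ with the coefficient $\Gamma(l-i-\tfrac12)/\big((l-i-1)!\,i!\big)$, and that the ranges $l\in\mathbb{Z}_j^+$, $i\in\mathbb{Z}_{l-1}$ emerge naturally (the lower limit $l\geq 1$ reflecting that at least one derivative must strike the error function), is the computation that demands care. Everything else follows the already-established template of the odd case.
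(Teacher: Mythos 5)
Your proposal is correct and follows essentially the same route as the paper's proof: reduce $\mathcal{M}_{2j}$ to $\mathcal{M}_0$ by applying Lemma \ref{lem:Sec5} $j$ times, express $\mathcal{M}_0[\sqrt{\alpha},b]$ via the error function, and then expand $\mathcal{D}_\alpha^{j}\left[\alpha^{-1/2}{\rm erf}(b\sqrt{\alpha})\right]$ with nested applications of the Leibniz formula \eqref{Sec5:Leibniz} (the inner one applied to $\alpha^{-1/2}{\rm e}^{-b^2\alpha}$, yielding the index $i$ and the $\Gamma(l-i-\tfrac12)$ coefficients), before replacing $\sqrt{\alpha}$ by $\alpha$. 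The only cosmetic difference is that the paper obtains the inner sum directly from one Leibniz application rather than by induction, but the bookkeeping is identical.
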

\begin{proof}
The proof of this result is similar to that of Proposition \ref{lem:Sec5oddM}.
We first derive the form of $\mathcal{M}_{k}[\sqrt{\alpha},b]$ for $k=2j$ with $j\in\mathbb{N}_0$.
According to Lemma \ref{lem:Sec5}, we have that
$$
\mathcal{M}_{k}[\sqrt{\alpha},b]=(-1)^j\mathcal{D}_\alpha^j \mathcal{M}_{0}[\sqrt{\alpha},b].
$$
A direct computation leads to
\begin{eqnarray*}
\mathcal{M}_{0}[\sqrt{\alpha},b]=\int_0^b{\rm e}^{-\alpha x^2}{\rm d}x
 =\frac{\sqrt{\pi}}{2\sqrt{\alpha}}{\rm erf}(b\sqrt{\alpha}).
\end{eqnarray*}
Differentiating the equation above $j$ times with respect to  $\alpha$  yields that
\begin{equation}\label{Sec5:proofLemEven}
 \mathcal{M}_{k}[\sqrt{\alpha},b]=\frac{(-1)^{j}\sqrt{\pi}}{2}
  \mathcal{D}_\alpha^{j}\left[\frac{{\rm erf}(b\sqrt{\alpha})}{\sqrt{\alpha}}\right].
\end{equation}
Using the definition \eqref{Sec5:erf}, we have that
\begin{equation}\label{DDD}
 \mathcal{D}_\alpha\left[{\rm erf}(b\sqrt{\alpha})\right]=\frac{b}{\sqrt{\pi}}\frac{{\rm e}^{-b^2\alpha}}{\sqrt{\alpha}}.
\end{equation}
Note that
\begin{equation}\label{Sec5:proof1LemEven}
 \mathcal{D}_\alpha^{l}\left(\alpha^{-\frac{1}{2}}\right)=(-1)^l\dfrac{\Gamma(\frac{1}{2}+l)}{\Gamma(\frac{1}{2})}\alpha^{-\frac{1}{2}-l} ~\text{for}~l\in\mathbb{Z}_j.
\end{equation}
Using formula \eqref{DDD} with the Leibniz formula \eqref{Sec5:Leibniz} and the equation above yields that for $l\in\mathbb{Z}_j^+$
\begin{equation*}
  \mathcal{D}_\alpha^{l}\left({\rm erf}(b\sqrt{\alpha})\right)=\dfrac{(-1)^{l-1}}{\sqrt{\pi}}
  \sum_{i\in\mathbb{Z}_{l-1}}C_{l-1}^i\dfrac{\Gamma(l-i-\frac{1}{2})b^{2i+1}}{\Gamma(\frac{1}{2})}{\rm e}^{-b^2\alpha}\alpha^{\frac{1}{2}-l+i}.
\end{equation*}
Applying the Leibniz formula \eqref{Sec5:Leibniz} with the equation above and \eqref{Sec5:proof1LemEven}, we obtain that
\begin{eqnarray*}
\mathcal{D}_\alpha^{j}\left[\frac{{\rm erf}(b\sqrt{\alpha})}{\sqrt{\alpha}}\right]
&= &\dfrac{\Gamma(\frac{1}{2}+j)}{\Gamma(\frac{1}{2})}(-1)^j{\rm erf}(b\sqrt{\alpha})\alpha^{-\frac{1}{2}-j}\\
& &
+\frac{(-1)^{j-1}}{\sqrt{\pi}}
\sum_{l\in\mathbb{Z}_j^+}\sum_{i\in\mathbb{Z}_{l-1}}
\dfrac{j!\Gamma(l-i-\frac{1}{2})\Gamma(\frac{1}{2}+j-l)b^{2i+1}}{(j-l)!(l-i-1)!i!l(\Gamma(\frac{1}{2}))^2}{\rm e}^{-b^2\alpha}\alpha^{-j+i}.
\end{eqnarray*}
Substituting the equation above and $\Gamma(\frac{1}{2})=\sqrt{\pi}$ into \eqref{Sec5:proofLemEven} gives  that
\begin{eqnarray*}
\mathcal{M}_{k}[\sqrt{\alpha},b]=\dfrac{\Gamma(\frac{1}{2}+j)}{2}{\rm erf}(b\sqrt{\alpha})\alpha^{-\frac{1}{2}-j}
-\dfrac{1}{2\pi}
\sum_{l\in\mathbb{Z}_j^+}\sum_{i\in\mathbb{Z}_{l-1}}
\dfrac{j!\Gamma(l-i-\frac{1}{2})\Gamma(\frac{1}{2}+j-l)b^{2i+1}}{(j-l)!(l-i-1)!i!l}{\rm e}^{-b^2\alpha}\alpha^{-j+i}.
\end{eqnarray*}
In the equation above replacing $\sqrt{\alpha}$ by $\alpha$, we obtain
the desired result of this proposition.
\end{proof}

Combining formulas in Propositions 5.1, 5.3 and 5.4 provide a method to compute the exact values of the moments $w_{\alpha,\beta,k}$.

\section{Numerical experiments}

In this section, we carry out three numerical experiments to confirm the convergence estimates of the two quadrature formulas for \eqref{Sec2:Int} established in Section 4. Specifically, we verify  the relative errors (RE) and the convergence orders  (Order) of the two quadrature formulas.

The numerical results presented below were all obtained by using Matlab in a modest desktop (a Core 2 Quad with 4GB of Ram memory).
The error function \eqref{Sec5:erf} was computed by using {\it erf} of Matlab. Numerical methods for calculating the error function may be found in \cite{Cody, Chevillard, Strecok}.

We present in the first example the numerical results of the quadrature formula \eqref{alm:Sec4P} which we denote by QuadP.
The numerical convergence order of QuadP is computed by using the formula
\begin{equation*}
  {\rm Order}:=-\ln\left(\mathcal{E}_{n,m}^\alpha[f]\right)/\ln(\alpha),~\text{for}~\alpha>1,
\end{equation*}
for $m$, $n\in\mathbb{N}$ with $n>1$.

\begin{example}{\rm
This example is to verify  the  computational efficiency of the quadrature formula $\mathcal{Q}^{\alpha}_{n,m}[f]$ defined as in \eqref{alm:Sec4P}.
We consider the function $f(x):=x^2$ for $x\in I$.
The exact value of the corresponding integral is
$$\mathcal{I}_\alpha[f]=\left(\sqrt{\pi}{\rm erf}(\alpha)/2-\alpha{\rm e}^{-\alpha^2}\right)/(2\alpha^3).$$
}\end{example}

\vspace{-1em}
\begin{center}
\footnotesize
\makeatletter
\def\@captype{table}
\makeatother
{\caption{Relative errors and convergence orders of QuadP for $f(x):=x^2$}
\vspace{0.2cm}
\label{Sec6:Tab1Eg1}}{
\begin{tabular}{c|c|c|c|c|c|c|c}
\hline
\multirow{2}{*}{$ \alpha$}&\multirow{2}{*}{$c_0$}
&\multicolumn{2}{c|}{ $n=5$ }&\multicolumn{2}{c|}{$n=10$}
&\multicolumn{2}{c}{ $n=15$ }\\
\cline{3-8}
&& RE&Order&RE&Order&RE&Order\\
\hline
10       & 7.07e-2 &3.56e-14&16.80&2.93e-15&17.89&2.59e-14&16.94\\
50       & 1.41e-2 &1.55e-14&11.34&1.72e-14&11.31&3.46e-15&11.72\\
100     & 7.07e-3 &3.46e-15&10.41&3.27e-14&9.92 &3.94e-15&10.38\\
500     & 1.41e-3 &1.60e-13&7.87 &1.41e-14&8.26 &3.69e-14&8.11 \\
1000   & 7.07e-4 &2.74e-13&7.31 &1.60e-14&7.72 &1.38e-14&7.38 \\
5000   & 1.41e-4 &4.78e-15&6.97 &1.22e-14&6.86 &9.80e-15&6.88 \\
10000 & 7.07e-5 &1.51e-14&6.55 &1.25e-15&6.81 &3.53e-15&6.70 \\
\hline
\end{tabular}}
\end{center}

We list in Table \ref{Sec6:Tab1Eg1} the relative errors and the numerical convergence orders  of $\mathcal{Q}^{\alpha}_{n,4}[f]$ for different values of $\alpha$ and different choices of $n$.
The relative errors of the quadrature formula $\mathcal{Q}^{\alpha}_{n,4}[f]$ are depicted in Figure \ref{Sec6:Fig1Eg1}(A) for fixed $\alpha=50$ with $n$ changing from $2$ to $100$ and in Figure \ref{Sec6:Fig1Eg1}(B) for fixed $n=20$ with $\alpha$ changing from $10$ to $500$.
Table \ref{Sec6:Tab1Eg1} confirms the  computational efficiency of the quadrature formula $\mathcal{Q}^{\alpha}_{n,4}[f]$,  since the convergence orders of $\mathcal{Q}^{\alpha}_{n,4}[f]$ are higher than the asymptotic order of the values of the integrals decaying, which is $\mathcal{O}(\alpha^{-3})$ as $\alpha$ tends to infinity. From Figure \ref{Sec6:Fig1Eg1}, we observe that for a fixed $\alpha$ the relative errors of the quadrature formula change  modestly as $n$ grows and for a fixed $n$ they also
change modestly as $\alpha$ grows. This demonstrates that the quadrature formula \eqref{alm:Sec4P} is efficiency and has accuracy of a high order.

\vspace{-2em}
\begin{center}
\makeatletter
\def\@captype{figure}
\makeatother
\includegraphics[width=7.0cm, height=8.5cm]{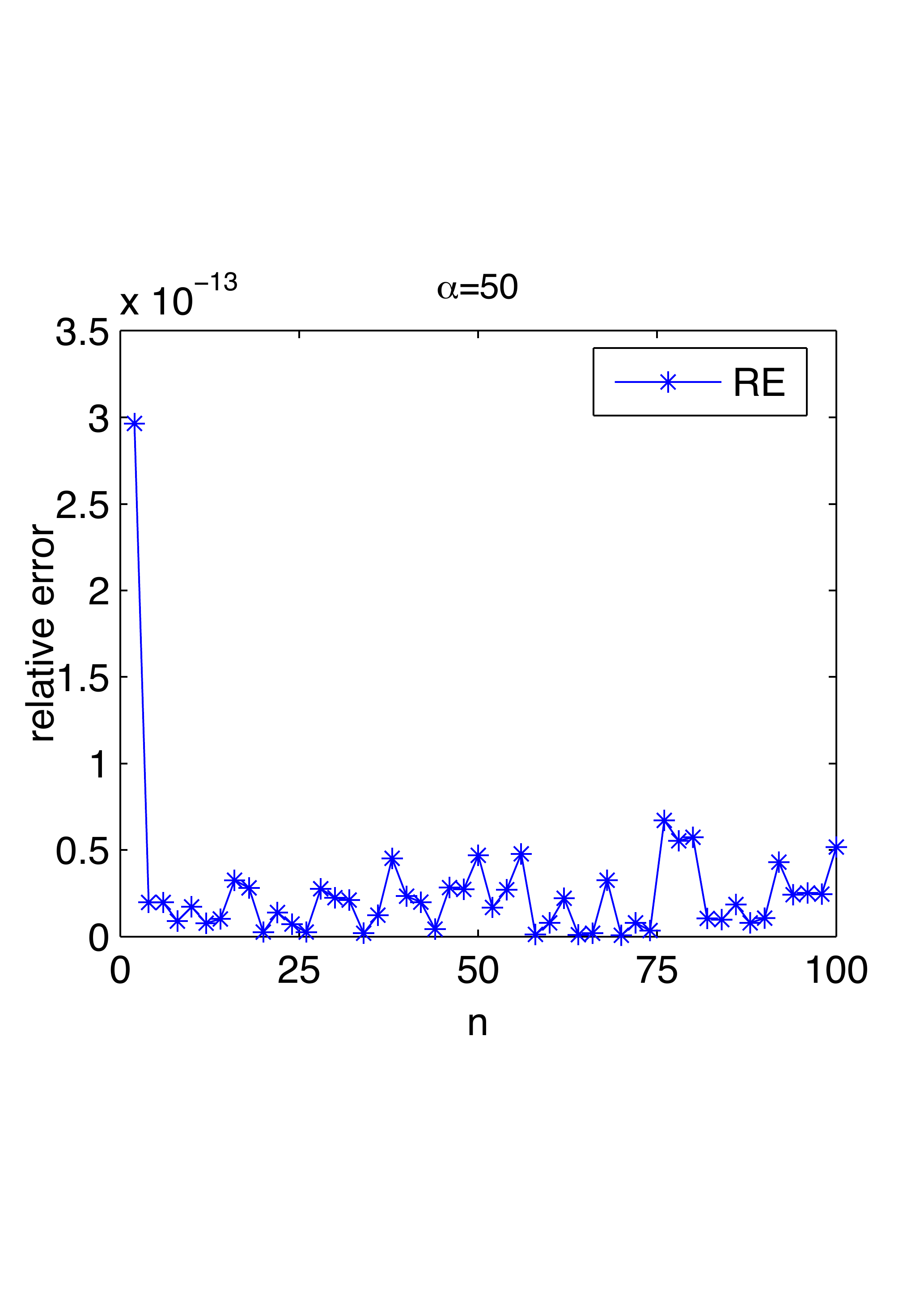}\ \hspace{1cm} \
\includegraphics[width=7.0cm, height=8.5cm]{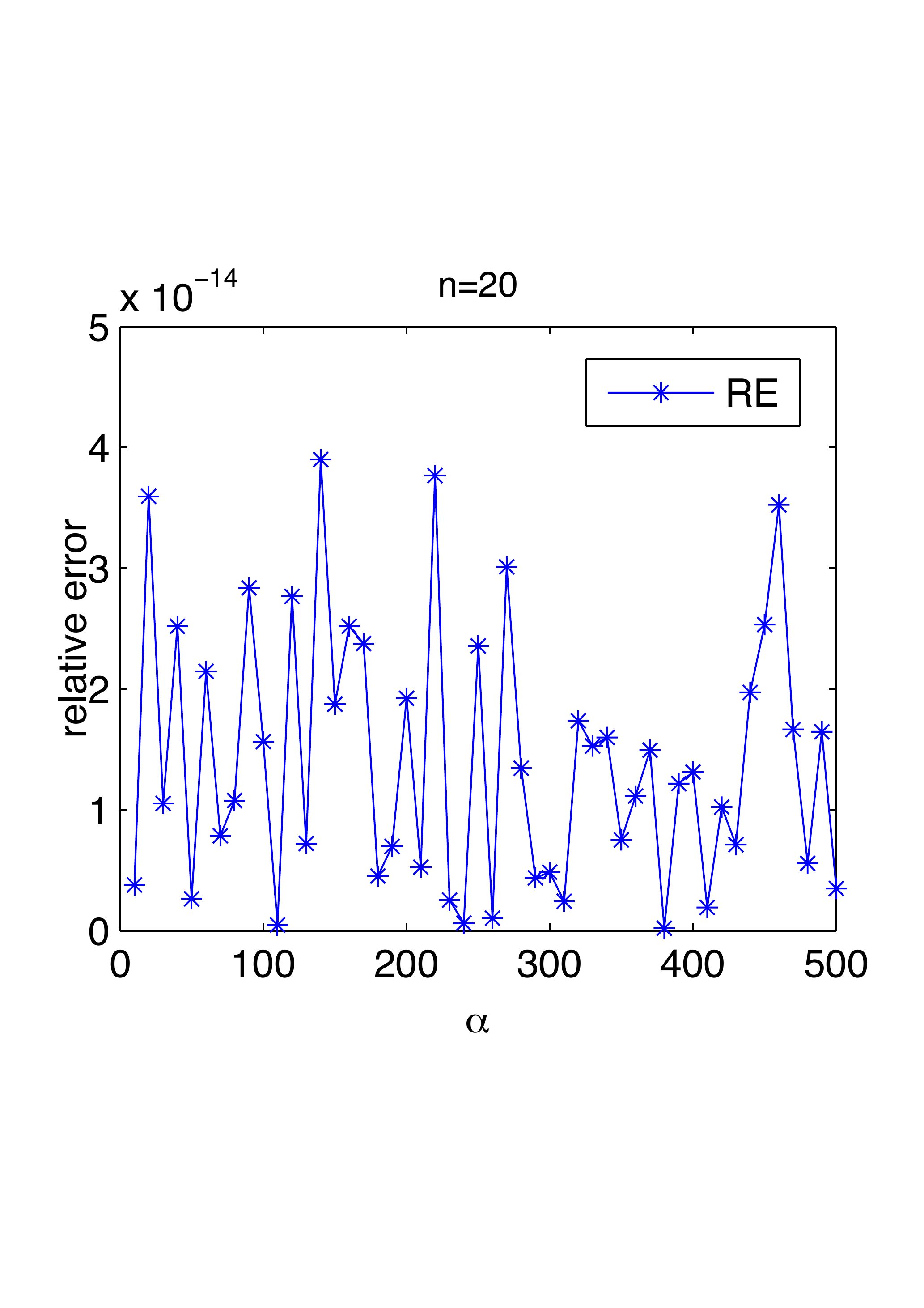}\\
\vspace{-4.5em}
\centerline{\small  A: ~Fixed $\alpha$ ($c_0$=1.14e-2 )~\hspace{7cm}\hspace{-6em} B: ~Fixed $n$}
\vspace{-1em}
{\small\caption{Relative errors of the approximation to the values of the integrals with $f(x):=x^2$}
\label{Sec6:Fig1Eg1}}
\end{center}

In  Table \ref{Sec6:Tab2Eg1}, we compare the relative errors  of the formula $\mathcal{Q}^{\alpha}_{n,2}[f]$ (Graded Mesh) with that of the standard composite  formula (Uniform). The standard composite  formula is constructed by subdividing the integration interval into $n$ equal subintervals and using the Simpson rule to calculate the integrals on each of the resulting subintervals. We conclude that the accuracy order of $\mathcal{Q}^{\alpha}_{n,2}[f]$ is much higher than the composite Simpson formula with a uniform partition.

\begin{center}
\footnotesize
\makeatletter
\def\@captype{table}
\makeatother
{\caption{Relative errors  of QuadP for $f(x):=x^2$ with different partitions}
\vspace{0.2cm}
\label{Sec6:Tab2Eg1}}{
\begin{tabular}{c|c|c|c|c|c|c}
\hline
\multirow{2}{*}{$ n$}
&\multicolumn{2}{c|}{$\alpha$=20 ($c_0$=3.54e-2)}
&\multicolumn{2}{c|}{$\alpha$=30  ($c_0$=2.36e-2)}
&\multicolumn{2}{c} {$\alpha$=40  ($c_0$=1.77e-2) }\\
\cline{2-7}
&Uniform&Graded Mesh&Uniform&Graded Mesh&Uniform&Graded Mesh\\
\hline
n=5    &5.59e-1&1.05e-14 &9.90e-1&7.64e-15&1.00e-0&8.44e-15\\
n=10  &2.21e-1&2.47e-14&7.29e-2&1.71e-14&5.59e-1&5.33e-14\\
n=20 &6.46e-4 &3.61e-14&6.45e-2&1.05e-14&2.21e-1&2.50e-14\\
\hline
\end{tabular}}
\end{center}

We next show  the numerical results of the quadrature formula \eqref{alm:Sec4E}  which we
 denote by QuadE. As in the first example, we compute the numerical convergence order of QuadE by using the formula
\begin{equation*}
  {\rm Order}:=-\ln\left(\mathcal{E}_{n}^\alpha[f]\right)/\ln(2\alpha),~\text{for}~\alpha>1,
\end{equation*}
for $n\in\mathbb{N}$ with $n>1$.

\begin{example}{\rm
We verify in this example the theoretical estimate presented in Theorem \ref{thm:Sec4E} for the quadrature formula $\mathcal{Q}^{\alpha}_{n}[f]$ defined by \eqref{alm:Sec4E}.
We consider the function $f(x):=\exp\set{-x^2}$ for $x\in I$.
The exact value of the corresponding integral is
$$\mathcal{I}_\alpha[f]=\sqrt{\pi}{\rm erf}(\sqrt{\alpha^2+1})/(2\sqrt{\alpha^2+1}).$$
}\end{example}

\begin{center}
\footnotesize
\makeatletter
\def\@captype{table}
\makeatother
{\caption{Relative errors and convergence orders of QuadE for $f(x):=\exp\set{-x^2}$}
\vspace{0.2cm}
\label{Sec6:Tab1Eg2}}{
\begin{tabular}{c|c|c|c|c|c|c|c}
\hline
\multirow{2}{*}{$ \alpha$}&\multirow{2}{*}{$ c_0$}
&\multicolumn{2}{c|}{ $n=3$ }&\multicolumn{2}{c|}{$n=4$}
&\multicolumn{2}{c}{ $n=5$ }\\
\cline{3-8}
& &RE&Order&RE&Order&RE&Order\\
\hline
20        & 3.54e-2 &1.37e-7  &5.13&1.12e-9  &6.43&1.14e-13&8.92\\
80        & 8.84e-3 &6.16e-9  &4.61&4.65e-12&6.03&4.70e-16&7.84\\
160      & 4.42e-3 &5.38e-9  &4.20&2.99e-13&5.90&3.13e-16&7.09\\
200      & 3.54e-3 &4.26e-9  &4.12&1.23e-13&5.87&1.96e-16&6.94 \\
800      & 8.84e-4 &5.50e-10&3.81&7.83e-16&5.64&1.96e-16&5.83\\
2000    & 3.54e-4 &1.10e-10&3.70&4.89e-16&5.18&1.22e-16&5.35 \\
\hline
\end{tabular}}
\end{center}

Numerical results of this example are reported in Tables \ref{Sec6:Tab1Eg2}--\ref{Sec6:Tab2Eg2}
and Figure \ref{Sec6:Fig1Eg2}.
The relative errors  and the order of convergence of $\mathcal{Q}^{\alpha}_{n}[f]$ for different values of $\alpha$ and choices of $n=3,4,5$ are listed in Table \ref{Sec6:Tab1Eg2}. We plot the relative errors of $\mathcal{Q}^{\alpha}_{n}[f]$ for fixed $\alpha=600$ in Figure \ref{Sec6:Fig1Eg2} (A) with $n$ ranging from $4$ to $12$.
The absolute errors (AE) of $\mathcal{Q}^{\alpha}_{n}[f]$ multiplied by $(2\alpha)^{n+1}$ are depicted
in Figure \ref{Sec6:Fig1Eg2} (B) for $n=4$, where we choose $\alpha$ ranging from $300$ to $400$.
From Table \ref{Sec6:Tab1Eg2} and Figure \ref{Sec6:Fig1Eg2} (A), we observe that for
a fixed $\alpha$, the accuracy of the quadrature formula improves as $n$ grows and
for a fixed $n$, it also improves as $\alpha$ grows. From Figure \ref{Sec6:Fig1Eg2} (B) we see that the asymptotic order of convergence of $\mathcal{Q}^{\alpha}_{n}[f]$ is $\mathcal{O}((2\alpha)^{-5})$ for $n=4$, which concurs with the theoretical estimate given in  Theorem \ref{thm:Sec4E}. In Table \ref{Sec6:Tab2Eg2}, we compare the RE of the formula $\mathcal{Q}^{\alpha}_{n}[f]$ (Graded Mesh) with that of the composite Simpson formulas  (Uniform) having  $n$ uniform subintervals.
We see that the accuracy of $\mathcal{Q}^{\alpha}_{n}[f]$ is much higher (with fewer quadrature nodes) than that of the composite Simpson rule. The formulas defined  by \eqref{alm:Sec4E} enhance the approximation accuracy and as well as reduce the computational complexity.

\vspace{-2em}
\begin{center}
\makeatletter
\def\@captype{figure}
\makeatother
\includegraphics[width=7.0cm, height=8.5cm]{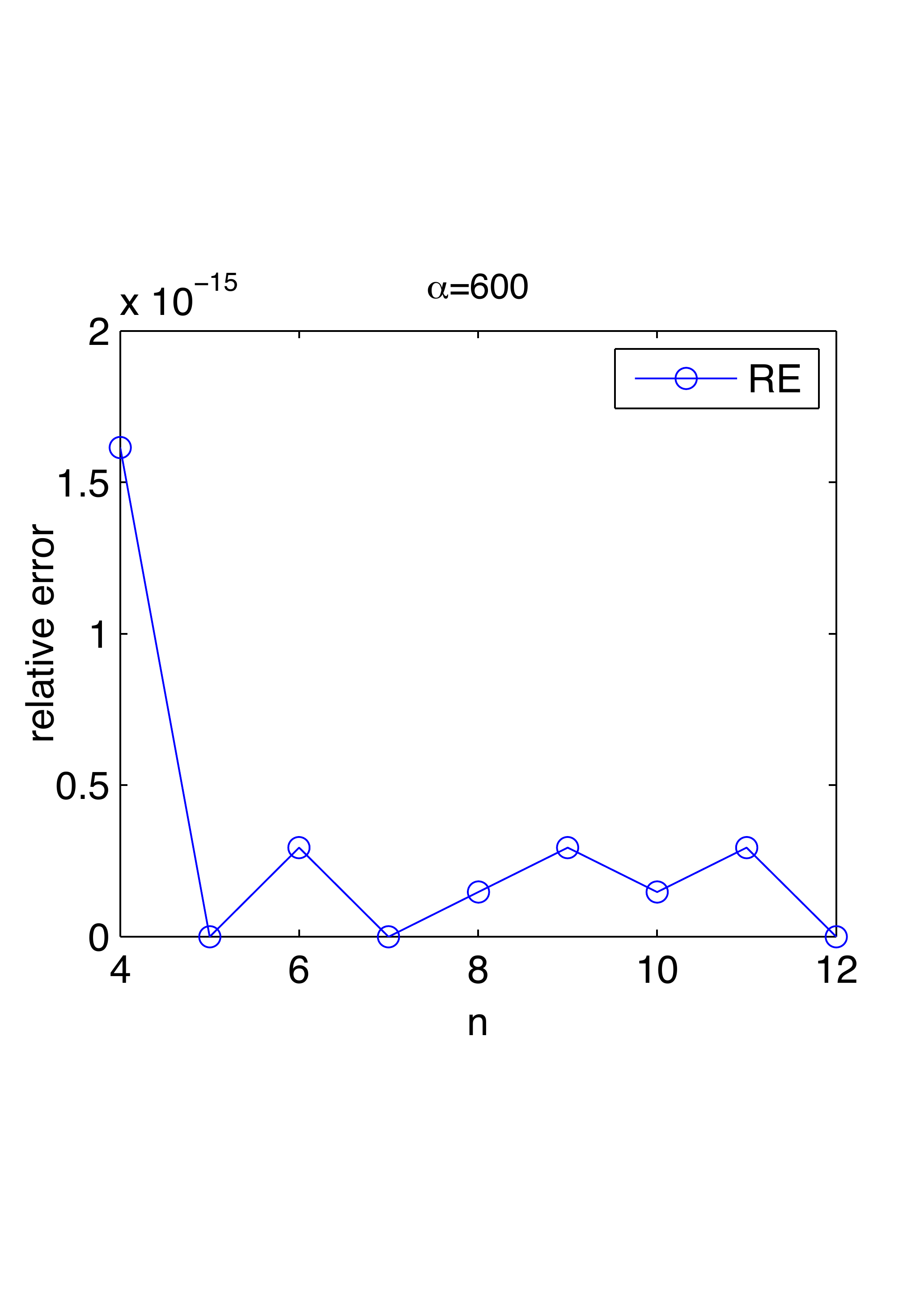}\ \hspace{2cm} \
\includegraphics[width=7.0cm, height=8.5cm]{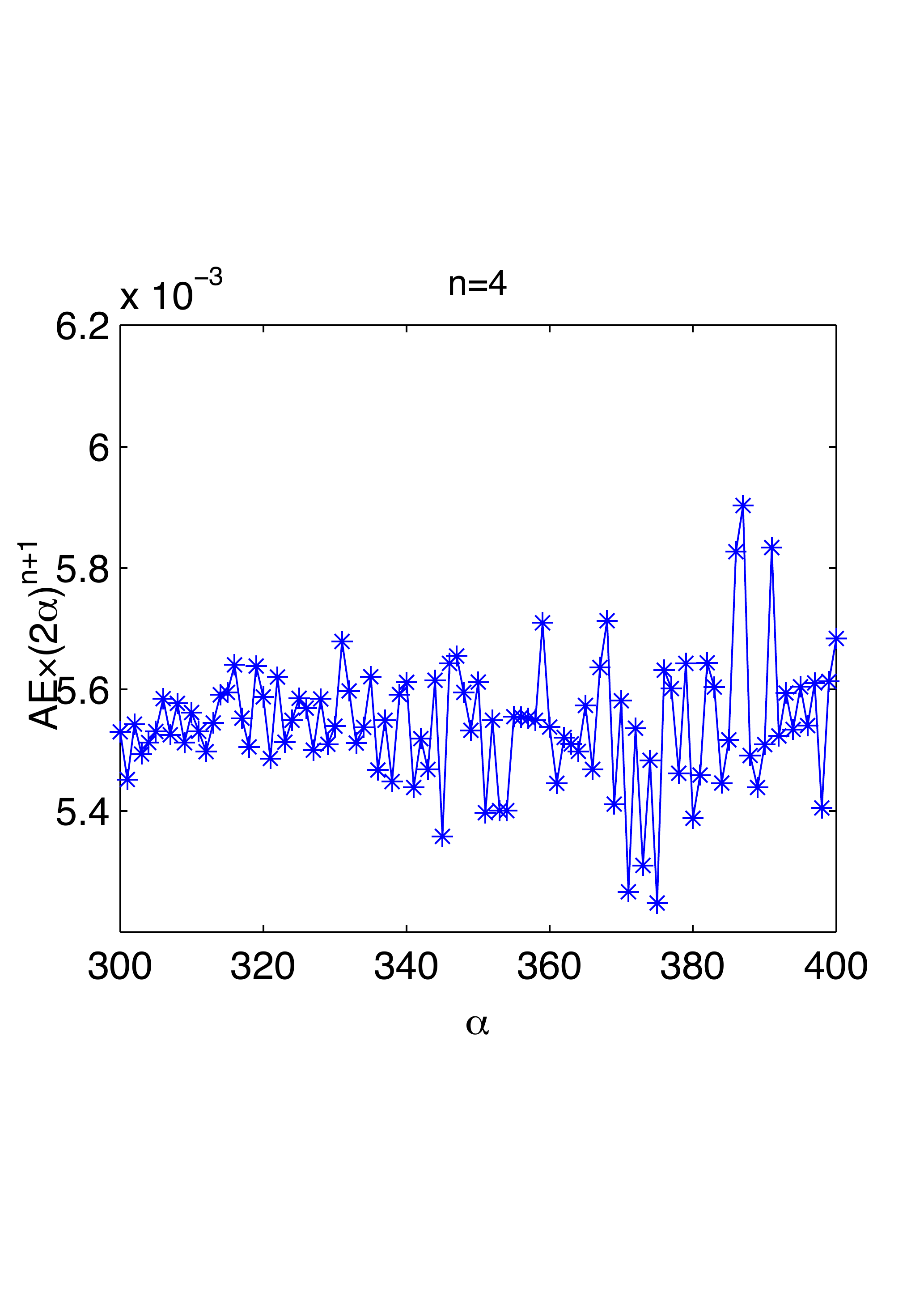}\\
\vspace{-4.5em}
\centerline{\small A: ~RE for fixed $\alpha$ ~\hspace{6cm} B: ~$(2\alpha)^5\mathcal{E}^{\alpha}_{4}[f]$ }
\vspace{-1em}
{\small\caption{Relative errors of $\mathcal{Q}^{\alpha}_{n}[f]$ with fixed $\alpha$ and $\mathcal{E}^{\alpha}_{4}[f]$ scaled by $\alpha^5$ for $f(x):=\exp\set{-x^2}$}
\label{Sec6:Fig1Eg2}}
\end{center}

\vspace{-1em}
\begin{center}
\footnotesize
\makeatletter
\def\@captype{table}
\makeatother
{\caption{Relative errors  of QuadE for $f(x):=\exp\set{-x^2}$ with different partitions}
\vspace{0.2cm}
\label{Sec6:Tab2Eg2}}{
\begin{tabular}{c|c|c|c|c|c|c|c|c|c|c|c}
\hline
\multicolumn{4}{c|}{ $\alpha=30$ ($c_0$=2.36e-2) }&
\multicolumn{4}{c|}{$\alpha=50$  ($c_0$=1.41e-2) }
&\multicolumn{4}{c}{ $\alpha=100$ ($c_0$=7.07e-3) }\\
\hline
\multicolumn{2}{c|}{Uniform}&\multicolumn{2}{c|}{ Graded Mesh}
&\multicolumn{2}{c|}{ Uniform }&\multicolumn{2}{c|}{ Graded Mesh }
&\multicolumn{2}{c}{ Uniform }&\multicolumn{2}{c}{Graded Mesh}\\
\hline
RE  & $\mathcal{N}$ & RE  &$ \mathcal{N}$& RE  & $\mathcal{N}$ & RE  & $\mathcal{N}$  &RE  &$ \mathcal{N}
$&RE  & $ \mathcal{N}$\\
\hline
  1.61e-1 &15 &5.54e-8  & 14& 3.44e-1&15&5.84e-9  & 14&1.69   &15&6.97e-9   & 14\\
  5.66e-2 &31 &2.34e-10& 29& 2.17e-1 &31&2.97e-11& 29&2.54e-1&31&1.93e-12& 29\\
\hline
\end{tabular}}
\end{center}

In the last example, we test the computational efficiency of the quadrature formulas \eqref{alm:Sec4P} and \eqref{alm:Sec4E} for a piecewise continuous function.

\begin{example}{\rm
We consider in this example the function
\begin{equation}\label{Sec6:fun}
f(x):=
\begin{cases}
  1,& \text{for}~x\in[0, 1/2],\\
  1/2, &\text{for}~x\in(1/2,1].
\end{cases}
\end{equation}
The exact value of the corresponding integral is
$$
\mathcal{I}_\alpha[f]=\sqrt{\pi}\left({\rm erf}(\alpha)+{\rm erf}(\alpha/2)\right)/(4\alpha).
$$
We list  in Table \ref{Sec6:Tab1Eg3}  the  relative errors  and the order of convergence of the quadrature formula $\mathcal{Q}^{\alpha}_{n,4}[f]$, and in Table \ref{Sec6:Tab2Eg3} those of the quadrature formula $\mathcal{Q}^{\alpha}_{n}[f]$.
According to the results reported in  Tables \ref{Sec6:Tab1Eg3} and  \ref{Sec6:Tab2Eg3}, we confirm that the quadrature formulas proposed in this paper are efficiency for the integrals defined as in \eqref{Sec2:Int}, which is a piecewise continuous function, even though the standard deviation is very small.

}\end{example}

\vspace{-1em}
\begin{center}
\footnotesize
\makeatletter
\def\@captype{table}
\makeatother
{\caption{Relative errors and convergence orders of QuadP for the function \eqref{Sec6:fun}}
\vspace{0.2cm}
\label{Sec6:Tab1Eg3}}{
\begin{tabular}{c|c|c|c|c|c|c|c}
\hline
\multirow{2}{*}{$ \alpha$}&\multirow{2}{*}{$c_0$}
&\multicolumn{2}{c|}{ $n=4$ }&\multicolumn{2}{c|}{$n=12$}
&\multicolumn{2}{c}{ $n=16$ }\\
\cline{3-8}
&&RE&Order&RE&Order&RE&Order\\
\hline
100         &7.07e-3 &1.96e-16&8.88&1.96e-16&8.88&1.96e-16&8.88\\
1000       &7.07e-4 &4.89e-16&6.12&2.45e-16&6.22&3.67e-16&6.16\\
10000     &7.07e-5 &4.59e-16&4.85&6.12e-16&4.82&1.53e-16&4.97\\
100000   &7.07e-6 &9.56e-16&4.01&1.91e-16&4.15&3.82e-16&4.09\\
1000000 &7.07e-7 &2.15e-15&3.45&2.39e-16&3.61&1.19e-16&3.66\\
\hline
\end{tabular}}
\end{center}

\vspace{-1em}
\begin{center}
\footnotesize
\makeatletter
\def\@captype{table}
\makeatother
{\caption{Relative errors and convergence orders of QuadE for the function \eqref{Sec6:fun}}
\vspace{0.2cm}
\label{Sec6:Tab2Eg3}}{
\begin{tabular}{c|c|c|c|c|c|c|c}
\hline
\multirow{2}{*}{$ \alpha$}&\multirow{2}{*}{$ c_0$}
&\multicolumn{2}{c|}{ $n=3$ }&\multicolumn{2}{c|}{$n=4$}
&\multicolumn{2}{c}{ $n=5$ }\\
\cline{3-8}
&&RE&Order&RE&Order&RE&Order\\
\hline
2000         & 3.54e-4  &3.67e-16&5.22&4.89e-16&5.18&2.45e-16&5.27\\
20000       &  3.54e-5 &2.45e-15&4.12&1.22e-15&4.19&3.06e-16&4.32\\
200000     & 3.54e-6  &1.53e-14&3.42&1.91e-16&3.76&3.82e-16&3.71\\
2000000   &  3.54e-7&2.44e-14&3.02&3.58e-15&3.15&9.56e-16&3.24\\
20000000 & 3.54e-8 &9.05e-14&2.68&3.29e-15&2.87&4.48e-16&2.99\\
\hline
\end{tabular}}
\end{center}



}


\begin{thebibliography}{99}

\setlength{\itemsep}{- 1mm}
%
\bibitem{Abramowitz} M. Abramowitz and I. A. Stegun, \emph{Handbook of Mathematical Functions with Formulas, Graphs and Mathematical Tables}, Dover, New York, 1965.
\bibitem{Alexander} Alexander D. Poularikas, \emph{The Handbook of Formulas and
Tables for Signal Processing}, CRC
Press, Boca Raton, FL, 1998.

\bibitem{Arfken}G. B. Arfken, H. J. Weber and F. E. Harris,
\emph{Mathematical Methods for Physicists} (7th ed.), Academic Press, Waltham, MA, 2013.

\bibitem{Cody}W. J. Cody, Rational Chebyshev approximations for the error function,
\emph{Mathematics of Computation}, {\bf 23} (1969), 631--637.

\bibitem{Cheney}E. W. Cheney, \emph{Introduction to Approximation Theory},
McGraw-Hill, New York, 1966.

\bibitem{Chevillard}S. Chevillard, The functions erf and erfc computed with arbitrary precision and explicit error bounds, \emph{Information and  Computation}, {\bf 216} (2012), 72--95.

\bibitem{CMX}Z. Chen, C.A. Micchelli and Y. Xu,
\emph{Multiscale Methods for Fredholm Integral Equations,}
Cambridge University Press, Cambridge, 2015.

\bibitem{Xu1} Z. Chen, B. Wu and Y. Xu, Error control strategies for  numerical integrations in
fast collocation methods, \emph{Northeastern Mathematical Journal}, {\bf 21} (2005), no.2, 233--252.

\bibitem{Davis}P. J. Davis and P. Rabinowitz, \emph{Methods of Numerical Integration} (2nd ed.), Academic Press, New York, 1984.


\bibitem{Feller1}W. Feller, \emph{An Introduction to Probability Theory and Its Applications}  (Vol. 1, 3rd ed.), John Wiley \& Sons, New York, 1968.

\bibitem{Feller2}W. Feller, \emph{An Introduction to Probability Theory and Its Applications} (Vol. 2, 3rd ed.), John Wiley \& Sons, New York, 1971.


\bibitem{Gradshteyn}I. S. Gradshteyn and I. M. Ryzhik, \emph{Table of Integrals, Series, and Products}, Academic Press, New York, 1994.

\bibitem{Gonzalez}R. C. Gonzalez and R. E. Woods, \emph{Digital Image Processing},  Addison-Wesley, Boston, 1993.

\bibitem{Keller}E. Isaacson and M. Keller, \emph{Analysis of Numerical Methods},  John Wiley, New York, 1966.

\bibitem{Ito}K. Ito and K. Xiong, Gaussian filters for nonlinear filtering problems, \emph{IEEE Transactions on Automatic Control}, {\bf 45} (2000),  910--927.

\bibitem{John} F. John, \emph{Partial Differential Equations}, volume 1 of Applied Mathematical Sciences, Springer-Verlag, New York, 1982.

\bibitem{Xu2}H. Kaneko and Y. Xu, Gauss-type quadratures for weakly singular integrals and their
application to Fredholm integral equations of the second kind, \emph{Mathematics of Computation}, {\bf 62} (1994), 739--753.

\bibitem{Kim}D. S. Kim, T. Kim, S. H.  Rim and S. H. Lee, Hermite polynomials and their applications associated with Bernoulli and Euler numbers, \emph{Discrete Dynamics in Nature and Society}, {\bf 2012}(2012), Article ID 974632,  13 pages.


\bibitem{Lebedev}N. N. Lebedev, \emph{Special Functions and their Applications}, Prentice Hall,  Englewood Cliffs, New Jersey, 1965.

\bibitem{Lu0} J. Lu,  D. L. Darmofal, Higher-dimensional integration with Gaussian weight for
applications in probabilistic design, \emph{SIAM Journal on Scientific Computing}, {\bf 26} (2004), 613--624.

\bibitem{Lu}Y. Lu, L. Shen and Y Xu, Integral equation models for image restoration: high accuracy methods and fast algorithm, \emph{Inverse Problems}, {\bf 26} (2010), 045006(32pp).

\bibitem{Shen}J. Shen, T. Tang and L. L. Wang, \emph{Spectral Methods: Algorithms, Analysis and Applications}, Springer, Berlin, 2011.

\bibitem{Mandl}F. Mandl, \emph{Statistical Physics} (2nd ed.),  the Manchester Physics Series, John Wiley \& Sons, Chichester, 1988

\bibitem{Xu3}Y. Ma and Y. Xu, Computing oscillatory integrals: partition of the integration interval based on the singularity and the wave number of the integrand (in Chinese), \emph{Scientia Sinica Mathematica},  {\bf 45} (2015), 1133--1152.

\bibitem{Xu4}Y. Ma and Y. Xu, Computing highly oscillatory integrals,
\emph{Mathematics of Computation}, {\bf 87} (2018), 309--345.

\bibitem{Strecok}A.  Strecok, On the calculation of the inverse of the error function, \emph{Mathematics of Computation}, {\bf 22} (1968), 144-158.

\bibitem{Schwab}C. Schwab, Variable order composite quadratures for singular and nearly singular integrals, \emph{Computing}, {\bf 53} (1994),  173--194.

\bibitem{Steen}N. M. Steen, G. D. Byrne and E. M. Gelbard, Gaussian quadratures for the integrals $\int_0^\infty\exp{(-x^2)}f(x){\rm d}x$ and $\int_0^b\exp{(-x^2)}f(x){\rm d}x$,
\emph{Mathematics of Computation},
{\bf 23} (1969),  661--671.


\bibitem{Temme}N. M. Temme, \emph{Special Functions: An Introduction to the Classical Functions of Mathematical Physics}, John Wiley \& Sons, New York, 1996.

\bibitem{RKM}R.  M. Thambynayagam,  \emph{The Diffusion Handbook: Applied Solutions for Engineers}, McGraw-Hill, New York, 2011.

\bibitem{Walter}G. G. Walter, \emph{Wavelets and Other Orthogonal Systems with Applications}, CRC Press, Boca Raton, 1994.

\bibitem{Kuan}K. Xu, The Chebyshev points of the first kind, \emph{Applied Numerical Mathematics}, {\bf 102} (2016), 17--30.

\end{thebibliography}
\end{document}